\documentclass[9pt,oneside]{amsart}
\usepackage[T1]{fontenc}
\usepackage{geometry}
\geometry{verbose,tmargin=2.5cm,bmargin=2.5cm,lmargin=2.5cm,rmargin=2.5cm}
\usepackage{mathtools}
\usepackage{amstext}
\usepackage{amsthm}
\usepackage{amssymb}

\makeatletter
\numberwithin{equation}{section}
\numberwithin{figure}{section}
\theoremstyle{plain}
\newtheorem{thm}{\protect\theoremname}
\theoremstyle{plain}
\newtheorem{cor}[thm]{\protect\corollaryname}
\theoremstyle{plain}
\newtheorem{prop}[thm]{\protect\propositionname}
\theoremstyle{plain}
\newtheorem{lem}[thm]{\protect\lemmaname}
\theoremstyle{definition}
\newtheorem{example}[thm]{\protect\examplename}

\usepackage{listings}
\usepackage{shuffle}

\makeatother

\providecommand{\corollaryname}{Corollary}
\providecommand{\examplename}{Example}
\providecommand{\lemmaname}{Lemma}
\providecommand{\propositionname}{Proposition}
\providecommand{\theoremname}{Theorem}

\begin{document}
\address[Minoru Hirose]{Graduate School of Science and Engineering, Kagoshima University, 1-21-35 Korimoto, Kagoshima, Kagoshima 890-0065, Japan}
\email{hirose@sci.kagoshima-u.ac.jp}
\subjclass[2020]{Primary 11M32, Secondary 11F67}
\keywords{cyclotomic multiple zeta values, periods of modular forms, period polynomials}

\title{Colored double zeta values and modular forms of general level}
\author{Minoru Hirose}
\begin{abstract}
Gangl, Kaneko, and Zagier gave explicit linear relations among double
zeta values of odd indices coming from the period polynomials of modular
forms for ${\rm SL}(2,\mathbb{Z})$. In this paper, we generalize
their result to the linear relations among colored double zeta values
of level $N$ coming from the modular forms for level $N$ congruence
subgroups.
\end{abstract}

\maketitle

\section{Introduction}

In \cite{GKZ}, Gangl, Kaneko and Zagier gave $\mathbb{Q}$-linear
relations among double zeta values of odd indices $\zeta({\rm odd},{\rm odd})$
by using even period polynomials of modular forms for ${\rm SL}(2,\mathbb{Z})$.
The purpose of this paper is to generalize their result to colored
multiple zeta values and even period polynomials of modular forms
of level $N$. More precisely, Gangl, Kaneko and Zagier's theorem
and our main theorem are formulated as identities in the formal double
zeta space, which is a lift of the space of double zeta values satisfying
the double shuffle relations. First, we will explain concrete examples
of the Gangl-Kaneko-Zagier's result and our main theorem applied to
(colored) double zeta values, and then we will present the precise
formulation using the formal double zeta space.

\subsection{Gangl-Kaneko-Zagier's formula for double zeta values}

Let us start by reviewing the results of Gangl, Kaneko, and Zagier.
We define a multiple zeta value by
\[
\zeta(k_{1},\dots,k_{d}):=\sum_{0<m_{1}<\cdots<m_{d}}\frac{1}{m_{1}^{k_{1}}\cdots m_{d}^{k_{d}}}\in\mathbb{R}
\]
where $k_{1},\dots,k_{d}$ are positive integers with $k_{d}\neq1$.
Then, for even integers $k\geq4$, they considered the $\mathbb{Q}$-linear
relations among $\zeta(r,s)$'s and $\pi^{k}$ where $r$ and $s$
are odd positive integers satisfying $r+s=k$ and $s\geq3$. Then,
numeric calculation suggests that the dimensions of such $\mathbb{Q}$-linear
relations are (conjecturally) equal to $\left\lfloor k/4\right\rfloor -\left\lfloor (k+4)/6\right\rfloor $,
which are the dimension of the space of weight $k$ cusp forms for
${\rm SL}(2,\mathbb{Z})$. For example, when $k=12$, the relations
are (conjecturally) generated by
\[
28\zeta(3,9)+150\zeta(5,7)+168\zeta(5,7)-\frac{5197}{3^{6}5^{3}7^{2}11^{1}13^{1}}\pi^{12}=0.
\]
Furthermore, they construct such linear relations by the period polynomial
of cusp forms. For example, the space of weight $k$ cusp forms for
${\rm SL}(2,\mathbb{Z})$ is generated by the modular discriminant
\[
\Delta(\tau)=q\prod_{n=1}^{\infty}(1-q^{n})^{24}\qquad(q=e^{2\pi i\tau})
\]
and its restricted even period polynomial is a constant multiple of
\[
p_{\Delta}(X,Y)=X^{2}Y^{2}(X^{2}-Y^{2})^{3}.
\]
Then, we have
\begin{equation}
\frac{1}{2880}p_{\Delta}(X-Y,X)=28\frac{X^{2}Y^{8}}{2!8!}-84\frac{X^{3}Y^{7}}{3!7!}+150\frac{X^{4}Y^{6}}{4!6!}-190\frac{X^{5}Y^{5}}{5!5!}+168\frac{X^{6}Y^{4}}{6!4!}-84\frac{X^{7}Y^{3}}{7!3!}.\label{eq:Delta1}
\end{equation}
Then, their result implies the equality
\[
28\zeta(3,9)+150\zeta(5,7)+168\zeta(7,5)=\frac{1}{3}\left(-84\zeta(4,8)-190\zeta(6,6)-84\zeta(8,4)-12\zeta(12)\right)
\]
where all the coefficients come from the ones in \ref{eq:Delta1}
(the coefficients $-12$ of $\zeta(12)$ comes from the sum of all
coefficients $28-84+150-190+168-84$). Here, since each $\zeta(2a,2b)+\zeta(2b,2a)$
and $\zeta(\mathrm{even})$ are rational multiple of power of $\pi$,
this shows that $28\zeta(3,9)+150\zeta(5,7)+168\zeta(7,5)$ is a rational
multiple of $\pi^{12}$.

\subsection{An example of main theorem: modular phenomena for certain Dirichlet
series and the cusp form for $X_{0}(11)$\label{subsec:intro_X0_11}}

Now, let us see one example of the main result of this paper. For
a rational prime $p\geq3$ and $a\in\mathbb{Z}/p\mathbb{Z}$, put
\[
L_{p}(a):=\sum_{0<m<n}\frac{\chi_{p}(am+n)}{mn}
\]
and
\[
L_{p}^{\pm}(a):=L_{p}(a)\pm L_{p}(-a)
\]
where
\[
\chi_{p}(t):=\begin{cases}
p-1 & t\in p\mathbb{Z}\\
-1 & t\notin p\mathbb{Z}.
\end{cases}
\]
Then, for a fixed $p$, let us consider the $\mathbb{Q}$-linear relations
among $L_{p}^{+}(a)$'s for $1\leq a\leq\frac{p-1}{2}$ and $\pi^{2}$.
Then, numeric calculation suggests that the dimensions of such $\mathbb{Q}$-linear
relations are (conjecturally) given by $\left\lfloor (p+1)/4\right\rfloor -\left\lfloor (p+5)/6\right\rfloor $,
which are surprisingly equal to the dimension of weight $2$ cusp
forms for $\Gamma_{0}(p)$ (\cite[Exercise 3.1.4]{DS_modular}). For
example, for $p=11$, the relations are (conjecturally) generated
by
\begin{equation}
-2L_{11}^{+}(1)+3L_{11}^{+}(2)+L_{11}^{+}(3)-L_{11}^{+}(4)-L_{11}^{+}(5)=\frac{71\pi^{2}}{33}.\label{eq:exa_Lsum_11}
\end{equation}
Furthermore, in this paper, we construct such linear relations by
the period polynomial of cusp forms as follows. For example, the space
of weight $2$ cusp forms for $\Gamma_{0}(11)$ is generated by
\[
f(\tau)=q\prod_{n=1}^{\infty}(1-q^{n})(1-q^{11n})\qquad(q=e^{2\pi i\tau}).
\]
Define the map $Q_{f},Q_{f}^{\mathrm{od}},Q_{f}^{\mathrm{ev}}:\{1,\dots,10\}\to\mathbb{C}$
by
\[
Q_{f}(a)=\begin{cases}
\int_{\frac{1}{a-1}}^{\frac{1}{a}}f(z)dz+\int_{\frac{-1}{a-1}}^{\frac{-1}{a}}f(z)dz & a\neq1\\
0 & a=1
\end{cases}
\]
and
\[
Q_{f}^{\mathrm{od}}(a)=\frac{Q_{f}(a)+Q_{f}(11-a)}{2},\ Q_{f}^{\mathrm{ev}}(a)=\frac{Q_{f}(a)-Q_{f}(11-a)}{2}.
\]
Then,
\begin{align}
(Q_{f}(1),\dots,Q_{f}(10)) & =c_{f}\cdot(0,4,-2,-4,-2,0,2,4,2,-4)\nonumber \\
(Q_{f}^{\mathrm{od}}(1),\dots,Q_{f}^{\mathrm{od}}(10)) & =c_{f}\cdot(-2,3,1,-1,-1,-1,-1,1,3,-2)\label{eq:Qfodd}\\
(Q_{f}^{\mathrm{ev}}(1),\dots,Q_{f}^{\mathrm{ev}}(10)) & =c_{f}\cdot(2,1,-3,-3,-1,1,3,3,-1,-2)\label{eq:Qfeven}
\end{align}
where $c_{f}\in\mathbb{C}^{\times}$ is a certain constant. Then main
result of this paper implies
\begin{align}
 & 3\left(-2L_{11}(1)+3L_{11}(2)+L_{11}(3)-L_{11}(4)-L_{11}(5)-L_{11}(6)-L_{11}(7)+L_{11}(8)+3L_{11}(9)-2L_{11}(10)\right)\nonumber \\
 & =-\left(2L_{11}(1)+L_{11}(2)-3L_{11}(3)-3L_{11}(4)-L_{11}(5)+L_{11}(6)+3L_{11}(7)+3L_{11}(8)-L_{11}(9)-2L_{11}(10)\right)\label{eq:exa_Lp_eq_gamma_11}\\
 & \qquad-Q_{f}(10)\left(11-\frac{1}{11}\right)\zeta(2),\nonumber 
\end{align}
where the coefficients come from (\ref{eq:Qfodd}) and (\ref{eq:Qfeven}).
Since
\[
L_{p}(c)+L_{p}(c^{-1})-L_{p}(-c)-L_{p}(c^{-1})=-2s(c,p)\pi^{2}+(\delta_{c,1}-\delta_{c,-1})(p-1/p)\frac{\pi^{2}}{6}
\]
for $c\in(\mathbb{Z}/p\mathbb{Z})^{\times}$ where $s(c,p)\in\mathbb{Q}$
is a Dedekind sum, (\ref{eq:exa_Lp_eq_gamma_11}) implies (\ref{eq:exa_Lsum_11}).

\subsection{Period polynomials for congruence subgroups}

Let us recall the theory of period polynomials for congruence subgroups
in \cite{Pasol_Popa_periodpolynomial}. Fix a nonnegative integer
$w$. We denote by $V_{w}$ the space of degree $w$ homogeneous polynomials
in $X$ and $Y$ with rational coefficients. We define the right action
of ${\rm GL}(2,\mathbb{Z})$ on $V_{w}$ by
\[
\left.P(X,Y)\right|_{\gamma}=P(aX+bY,cX+dY)\quad\text{for }\gamma=\left(\begin{array}{cc}
a & b\\
c & d
\end{array}\right)\in{\rm GL}(2,\mathbb{Z}).
\]
We use the following notations for some elements of ${\rm GL}(2,\mathbb{Z})$:
\[
\epsilon\coloneqq\left(\begin{array}{cc}
-1 & 0\\
0 & 1
\end{array}\right),\ J\coloneqq\left(\begin{array}{cc}
-1 & 0\\
0 & -1
\end{array}\right),\ S\coloneqq\left(\begin{array}{cc}
0 & -1\\
1 & 0
\end{array}\right),\ U\coloneqq\left(\begin{array}{cc}
1 & -1\\
1 & 0
\end{array}\right),\ T\coloneqq US^{-1}=\left(\begin{array}{cc}
1 & 1\\
0 & 1
\end{array}\right).
\]
Let $\Gamma$ be a congruence subgroup of ${\rm SL}(2,\mathbb{Z})$
such that $\epsilon\Gamma\epsilon=\Gamma$. We denote by $\tilde{V}_{w}^{\Gamma}$
the space of maps $P:\Gamma\backslash{\rm SL}(2,\mathbb{Z})\to V_{w}$.
We define the right action of ${\rm GL}(2,\mathbb{Z})$ on $\tilde{V}_{w}^{\Gamma}$
by $\left.P\right|_{\gamma}(C)=\left.P(C\gamma^{-1})\right|_{\gamma}$
for $\gamma\in{\rm SL}(2,\mathbb{Z})$ and $\left.P\right|_{\epsilon}(C)=\left.P(\epsilon C\epsilon)\right|_{\epsilon}$.
We define the subspaces $\tilde{V}_{w}^{\Gamma}\supset V_{w}^{\Gamma}\supset W_{w}^{\Gamma}\supset C_{w}^{\Gamma}$
by
\[
V_{w}^{\Gamma}\coloneqq\{P\in\tilde{V}_{w}^{\Gamma}:\left.P\right|_{J}=P\},
\]
\[
W_{w}^{\Gamma}\coloneqq\{P\in V_{w}^{\Gamma}:\left.P\right|_{1+S}=\left.P\right|_{1+U+U^{2}}=0\},
\]
and
\[
C_{w}^{\Gamma}\coloneqq\{\left.P\right|_{1-S}\,:\,P\in V_{w}^{\Gamma},\ \left.P\right|_{T}=P\}.
\]
Since the action by $\epsilon$ preserve $W_{w}^{\Gamma}$ and $C_{w}^{\Gamma}$,
they are decomposed into $\pm1$-eigenspaces, denoted by $W_{w}^{\Gamma,\pm}$
and $C_{w}^{\Gamma,\pm}$, respectively. For $f\in S_{w+2}(\Gamma)$
and $C\in\Gamma\backslash{\rm SL}(2,\mathbb{Z})$, define $\left.f\right|_{C}\in S_{w+2}(C^{-1}\Gamma C)$
by $\left.f\right|_{C}(z)=(cz+d)^{-w-2}f(\frac{az+b}{cz+d})$ where
$\left(\begin{smallmatrix}a & b\\
c & d
\end{smallmatrix}\right)\in C$. Furthermore, for $f\in S_{w+2}(\Gamma)$, define $\rho_{f}\in V_{w}^{\Gamma}\otimes\mathbb{C}$
by 
\[
\rho_{f}(C)=\int_{0}^{i\infty}\left.f\right|_{C}(z)\cdot(zY-X)^{w}dz.
\]
Then we can show $\rho_{f}\in W_{w}^{\Gamma}\otimes\mathbb{C}$ for
$f\in S_{w+2}(\Gamma)$. We put $\rho_{f}^{\pm}=\frac{1}{2}(\rho_{f}\pm\left.\rho_{f}\right|_{\epsilon})$.
Then, Popa-Pasol's restatement \cite[Theorem 2.1]{Pasol_Popa_periodpolynomial}
of Eichler--Shimura isomorphism says that the maps $S_{w+2}(\Gamma)\to W_{w}^{\Gamma,\pm}\otimes\mathbb{C}$,
$f\mapsto\rho_{f}^{\pm}$ give rise to isomorphisms
\[
S_{w+2}(\Gamma)\simeq(W_{w}^{\Gamma,\pm}/C_{w}^{\Gamma,\pm})\otimes\mathbb{C}.
\]
Furthermore, we put
\[
\bar{W}_{w}^{\Gamma,\pm}\coloneqq\begin{cases}
W_{w}^{\Gamma,\pm} & w:\,{\rm even}\\
W_{w}^{\Gamma,\mp} & w:\,{\rm odd}.
\end{cases}
\]
Note that $W_{w}^{{\rm SL}(2,\mathbb{Z}),\pm}$ is canonically isomorphic
to 
\[
W_{w}^{\pm}\coloneqq\{P\in V_{w}:\left.P\right|_{1+S}=\left.P\right|_{1+U+U^{2}}=0,\,\left.P\right|_{\epsilon}=\pm P\}
\]
treated in \cite{GKZ}.

\subsection{Formal double zeta space of level $N$}

Let us return to the double zeta values. The colored multiple zeta
values of level $N$ are generalization of multiple zeta values defined
by
\[
\zeta{k_{1},\dots,k_{d} \choose z_{1},\dots,z_{d}}=\sum_{0<m_{1}<\cdots<m_{d}}\frac{z_{1}^{m_{1}}\cdots z_{d}^{m_{d}}}{m_{1}^{k_{1}}\cdots m_{d}^{k_{d}}}\in\mathbb{C}
\]
where $k_{1},\dots,k_{d}$ are positive integers and $z_{1},\dots,z_{d}$
are $N$-th roots of unity with $(k_{d},z_{d})\neq(1,1)$. Furthermore,
we define the shuffle (resp. harmonic) regularized colored multiple
zeta values $\zeta^{\shuffle}{k_{1},\dots,k_{d} \choose z_{1},\dots,z_{d}}\in\mathbb{C}[T]$
(resp. $\zeta^{*}{k_{1},\dots,k_{d} \choose z_{1},\dots,z_{d}}\in\mathbb{C}[T]$)
by shuffle (resp. harmonic) regularization. They equal to $\zeta{k_{1},\dots,k_{d} \choose z_{1},\dots,z_{d}}$
if $(k_{d},z_{d})\neq(1,1)$ and satisfy the regularized double shuffle
relations (\cite{Rac02}, \cite{AK04}, \cite[Chapter 13]{ZhaoBook}):
\begin{align}
\zeta^{\shuffle}\binom{r}{\zeta_{N}^{a}}\zeta^{\shuffle}\binom{s}{\zeta_{N}^{b}} & =\sum_{j=0}^{r-1}\binom{s-1+j}{j}\zeta^{\shuffle}\binom{r-j,s+j}{\zeta_{N}^{a-b},\zeta_{N}^{b}}+\sum_{j=0}^{s-1}{r-1+j \choose j}\zeta^{\shuffle}{s-j,r+j \choose \zeta_{N}^{b-a},\zeta_{N}^{a}}\nonumber \\
 & =\zeta^{*}\binom{r,s}{\zeta_{N}^{a},\zeta_{N}^{b}}+\zeta^{*}\binom{s,r}{\zeta_{N}^{b},\zeta_{N}^{a}}+\zeta\binom{r+s}{\zeta_{N}^{a+b}}\label{eq:dsh_intro}
\end{align}
where $\zeta_{N}\coloneqq\exp(2\pi i/N)$. Furthermore, it is known
that $\zeta^{*}\binom{r,s}{\zeta_{N}^{a},\zeta_{N}^{b}}=\zeta^{\shuffle}\binom{r,s}{\zeta_{N}^{a},\zeta_{N}^{b}}$
except for the case $r=s=\zeta_{N}^{a}=\zeta_{N}^{b}=1$. Let $k\geq2$
and $N\geq1$ with $(k,N)\neq(2,1)$. Based on the regularized double
shuffle relations (\ref{eq:dsh_intro}), we define the \emph{formal
double zeta space} of level $N$ and weight $k$ as the $\mathbb{Q}$-vector
space $\mathcal{D}_{k,N}$ with generators 
\[
Z_{a,b}^{r,s},P_{a,b}^{r,s},Z_{c}^{k}\quad(r+s=k,\,r,s\geq1,\,a,b,c\in\mathbb{Z}/N\mathbb{Z},\,{\rm gcd}(a,b,N)=1)
\]
and relations
\[
P_{a,b}^{r,s}=\sum_{j=0}^{r-1}\binom{s-1+j}{j}Z_{a-b,b}^{r-j,s+j}+\sum_{j=0}^{s-1}{r-1+j \choose j}Z_{b-a,a}^{s-j,r+j}=Z_{a,b}^{r,s}+Z_{b,a}^{s,r}+Z_{a+b}^{r+s}.
\]
Furthermore, based on the property
\begin{equation}
\zeta\binom{k}{\zeta_{N}^{a}}+(-1)^{k}\zeta\binom{k}{\zeta_{N}^{-a}}=\sum_{m\neq0}\frac{\zeta_{N}^{am}}{m^{k}}=-\frac{B_{k}(a/N)}{k!}(2\pi i)^{k}\in\mathbb{Q}\cdot(2\pi i)^{k},\label{eq:euler_intro}
\end{equation}
define $\mathcal{P}_{k,N}^{{\rm ev}}\subset\mathcal{D}_{k,N}$ as
the $\mathbb{Q}$-vector subspace spanned by
\[
\left\{ P_{a,b}^{r,s}+(-1)^{r}P_{-a,b}^{r,s}+(-1)^{s}P_{a,-b}^{r,s}+(-1)^{r+s}P_{-a,-b}^{r,s}:\substack{r+s=k,r,s\geq1\\
a,b\in\mathbb{Z}/N\mathbb{Z},\,\gcd(a,b,N)\equiv1
}
\right\} \cup\left\{ Z_{a}^{k}+(-1)^{k}Z_{-a}^{k}:a\in\mathbb{Z}/N\mathbb{Z}\right\} .
\]
This is a generalization of the formal double zeta space $\mathcal{D}_{k}$
and its subspace $\mathcal{P}_{k}^{{\rm ev}}$ introduced in \cite{GKZ}.
Note that by (\ref{eq:dsh_intro}) and (\ref{eq:euler_intro}) the
$\mathbb{Q}$-linear map
\[
\Phi_{k,N}:\mathcal{D}_{k,N}\to\mathbb{C}\ ;\ Z_{a,b}^{r,s}\mapsto\zeta^{\shuffle}\binom{r,s}{\zeta_{N}^{a},\zeta_{N}^{b}},\,P_{a,b}^{r,s}\mapsto\zeta^{\shuffle}\binom{r}{\zeta_{N}^{a}}\zeta^{\shuffle}\binom{s}{\zeta_{N}^{b}},\,Z_{c}^{k}\mapsto\zeta^{\shuffle}\binom{k}{\zeta_{N}^{c}}
\]
is well-defined and
\[
\Phi_{k,N}(\mathcal{P}_{k,N}^{{\rm ev}})\subset\mathbb{Q}\cdot(2\pi i)^{k}.
\]
Note that the $\mathbb{Q}$-linear combination $\sum_{r,s,a,b}c_{a,b}^{r,s}Z_{a,b}^{r,s}$
is in $\Phi(\mathcal{P}_{k,N}^{{\rm ev}})$ if the coefficients have
the properties
\[
c_{a,b}^{r,s}=c_{b,a}^{s,r}=(-1)^{r}c_{-a,b}^{r,s}=(-1)^{s}c_{a,-b}^{r,s}
\]
for all $r,s,a,b$. Furthermore, as an analogy of $\zeta^{\mathfrak{m}}({\rm odd},{\rm odd})$,
define the set of (odd,odd)-type formal colored double zetas of level
$N$ as
\[
\left\{ \frac{1}{4}\sum_{\rho,\sigma\in\{\pm1\}}\rho^{r+1}\sigma^{s+1}Z_{\rho a,\sigma b}^{r,s}\,:\,\begin{aligned}0\leq a,b\leq N/2,\\
\text{\ensuremath{2a\notin N\mathbb{Z}} or \ensuremath{r} is odd},\\
\text{\ensuremath{2b\notin N\mathbb{Z}} or \ensuremath{s} is odd}
\end{aligned}
\right\} .
\]
Note that $\mathbb{Q}$-linear combinations of (odd,odd)-type formal
colored double zetas are equivalent to the sums of the form
\[
\sum_{a,b,r,s}c_{a,b}^{r,s}Z_{a,b}^{r,s}
\]
with
\[
c_{a,b}^{r,s}=(-1)^{r+1}c_{-a,b}^{r,s}=(-1)^{s+1}c_{a,-b}^{r,s}.
\]
Under this setting, Gangl--Kaneko--Zagier's result can be stated
as follows.
\begin{thm}[\cite{GKZ}]
\label{thm:GKZ}Fix an even positive integer $k$ and put $w=k-2$.
Let $P(X,Y)\in W_{w}^{+}$ and define $q^{r,s}\in\mathbb{Q}$ by
\[
P(X-Y,X)=\sum_{\substack{r+s=w\\
r,s\geq0
}
}\frac{1}{r!s!}q^{r+1,s+1}X^{r}Y^{s}.
\]
Put $Z^{r,s}\coloneqq Z_{0,0}^{r,s}\in\mathcal{D}_{r+s,1}$ and $Z^{k}\coloneqq Z_{0}^{k}\in\mathcal{D}_{k,1}$.
Then $q_{r,s}=q_{s,r}$ for $r,s$ even and
\[
3\sum_{\substack{r+s=k\\
r,s:{\rm odd}
}
}q^{r,s}Z^{r,s}=-\sum_{\substack{r+s=k\\
r,s,{\rm even}
}
}q^{r,s}Z^{r,s}-(\sum_{r+s=k}q^{r,s})Z^{k}\quad\in\mathcal{P}_{k,1}^{{\rm ev}}.
\]
Conversely, all the linear combinations of $Z^{r,s}$ with $r,s\geq1$
odd belonging to $\mathcal{P}_{k,1}^{{\rm ev}}$ are obtained by this
way.
\end{thm}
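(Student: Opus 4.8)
The plan is to realize the asserted identity as an explicit $\mathbb{Q}$-linear combination of the double shuffle relations presenting $\mathcal{D}_{k,1}$, and to settle the converse by a dimension count. Eliminating $P^{r,s}$ through $P^{r,s}=H^{r,s}:=Z^{r,s}+Z^{s,r}+Z^{k}$, the space $\mathcal{D}_{k,1}$ becomes the quotient of the free $\mathbb{Q}$-vector space on $\{Z^{r,s}\}_{r+s=k,\ r,s\ge 1}\cup\{Z^{k}\}$ by the relations $D^{r,s}:=H^{r,s}-Sh^{r,s}=0$, where $Sh^{r,s}:=\sum_{j\ge 0}\binom{s-1+j}{j}Z^{r-j,s+j}+\sum_{j\ge 0}\binom{r-1+j}{j}Z^{s-j,r+j}$; and $\mathcal{P}_{k,1}^{\mathrm{ev}}$, viewed in $\mathcal{D}_{k,1}$, is the span of the $Z^{r,s}+Z^{s,r}$ with $r,s$ even together with $Z^{k}$.

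The core of the proof is the claim that in the \emph{free} vector space one has the exact identity
\[
3\sum_{\substack{r+s=k\\ r,s\ \mathrm{odd}}}q^{r,s}Z^{r,s}\;+\sum_{\substack{r+s=k\\ r,s\ \mathrm{even}}}q^{r,s}Z^{r,s}\;+\;\Bigl(\sum_{r+s=k}q^{r,s}\Bigr)Z^{k}\;=\;\sum_{r+s=k}q^{r,s}\,D^{r,s}.
\]
Since each $D^{r,s}$ vanishes in $\mathcal{D}_{k,1}$, this yields both the displayed equality of the theorem and—granted $q^{r,s}=q^{s,r}$ for $r,s$ even—the fact that its common value lies in $\mathcal{P}_{k,1}^{\mathrm{ev}}$, by rewriting $-\sum_{r,s\ \mathrm{even}}q^{r,s}Z^{r,s}=-\tfrac12\sum_{r,s\ \mathrm{even}}q^{r,s}(Z^{r,s}+Z^{s,r})$, a combination of the $H^{r,s}$ ($r,s$ even) and of $Z^{k}$. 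To prove the displayed identity one compares the coefficient of each $Z^{a,k-a}$. Writing $[X^{i}Y^{j}]F$ for the coefficient of $X^{i}Y^{j}$ in $F$, one has $q^{r,s}=(r-1)!\,(s-1)!\,[X^{r-1}Y^{s-1}]\,P(X-Y,X)$, and the elementary identity $\tfrac{(r-1)!\,(s-1)!}{(a-1)!\,(k-1-a)!}\binom{k-1-a}{s-1}=\binom{r-1}{a-1}$ (valid when $r+s=k$) turns the comparison, after dividing by $(a-1)!\,(k-1-a)!$, into the single family of polynomial identities, for $1\le a\le k-1$:
\[
\bigl(1+(-1)^{a+1}\bigr)\,[X^{a-1}Y^{k-1-a}]\,P(X-Y,X)=[X^{k-1-a}Y^{a-1}]\,P(X-Y,X)-[X^{a-1}]\,P(X,1+X)-[X^{a-1}]\,P(X,1)
\]
(the left side being $0$ for $a$ even). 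These, together with $q^{r,s}=q^{s,r}$ for $r,s$ even, follow by expanding the three-term relation $P|_{1+U+U^{2}}=0$—for instance, evaluated at $(1+X,1)$ it reads $P(X,1+X)=-P(1+X,1)-P(1,X)$—together with the observation that $P|_{\epsilon}=P$ forces $P$ to have only even powers of $X$, hence, being homogeneous of even degree $w$, only even powers of $Y$, while $P|_{1+S}=0$ forces the coefficients $p_{j}$ of $P$ to satisfy $p_{j}=-p_{w-j}$. The coefficient $3$ on the left is exactly the length of the three-term relation $1+U+U^{2}$.

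For the converse, consider the $\mathbb{Q}$-linear map $\Psi\colon W_{w}^{+}\to\mathcal{D}_{k,1}$, $P\mapsto 3\sum_{r,s\ \mathrm{odd}}q^{r,s}Z^{r,s}$; by the forward part its image lies in the space $V$ of combinations of $Z^{r,s}$ ($r,s$ odd) belonging to $\mathcal{P}_{k,1}^{\mathrm{ev}}$, and the claim is that it is surjective. Since $P\mapsto P|_{U}$ is a bijection of $V_{w}$ and $\Psi(P)$ only records the even–even monomials of $P|_{U}$, one first checks that $\Psi$ is injective (no nonzero even period polynomial has $P|_{U}$ supported entirely off the even–even monomials, and this is detected in $\mathcal{D}_{k,1}$). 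On the other hand $\dim V=\dim W_{w}^{+}$ by the Eichler--Shimura isomorphism recalled above—equivalently, by the dimension computation for $\mathcal{D}_{k}$ in \cite{GKZ}. Hence $\Psi$ is an isomorphism onto $V$, which is the asserted converse.

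The decisive—and only genuinely nontrivial—point is the verification of the displayed family of polynomial identities (and the symmetry $q^{r,s}=q^{s,r}$ for even $r,s$): these use the three-term relation $P|_{1+U+U^{2}}=0$ in an essential way, the relations $P|_{1+S}=0$ and $P|_{\epsilon}=P$ alone yielding only the antisymmetry $p_{j}=-p_{w-j}$ and the vanishing of the odd-index coefficients. A secondary point is making the dimension count and the injectivity of $\Psi$ in the converse precise, which draws on the structure theory of $\mathcal{D}_{k,1}$ from \cite{GKZ}.
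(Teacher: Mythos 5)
Your forward direction is essentially sound, and it is worth noting that it takes a genuinely different (more elementary, coefficientwise) route from the paper: the paper does not reprove Theorem \ref{thm:GKZ} directly but recovers it as the $N=1$ case of Theorem \ref{thm:main}, where the combination of double shuffle relations you write as $\sum_{r+s=k}q^{r,s}D^{r,s}$ appears instead as the group-algebra identity $\lambda(Q|_{(1+\epsilon S)(1-T)})=\lambda(P|_{U(2+\epsilon)})=3\lambda(Q^{+})+\lambda(Q^{-})$ for $Q=P|_{U}$. Your reduction to the family of identities indexed by $a$ is correct (I checked it: matching the coefficient of $Z^{a,b}$ amounts to $(2+(-1)^{a+1})q^{a,b}=q^{a,b}+q^{b,a}-\sum_{r+s=k}q^{r,s}\bigl[\binom{b-1}{s-1}+\binom{b-1}{r-1}\bigr]$, which is your displayed family after dividing by $(a-1)!\,(b-1)!$). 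But your one-line justification of that family is not yet a proof: the single evaluation of $P|_{1+U+U^{2}}=0$ at $(1+X,1)$, together with $p_{j}=-p_{w-j}$ and evenness, disposes only of the case $a$ even; for $a,b$ odd the identity reduces to $2\sum_{l}p_{l}\bigl[\binom{l}{b-1}-\binom{l}{a-1}\bigr]=p_{b-1}-p_{a-1}$, and to obtain it you also need the three-term relation evaluated at $(1+X,X)$ combined with the $S$-relation (these give $P(1+X,1)-P(1+X,X)=P(X,1)$, from which both this identity and the symmetry $q^{r,s}=q^{s,r}$ for $r,s$ even follow). So the forward half is completable but under-justified at exactly the step you yourself single out as decisive.

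The genuine gap is in the converse. You need the upper bound $\dim V\le\dim W_{w}^{+}$, where $V$ is the space of odd-odd coefficient vectors $(c^{r,s})$ with $\sum c^{r,s}Z^{r,s}\in\mathcal{P}_{k,1}^{\rm ev}$. The Eichler--Shimura isomorphism computes $\dim W_{w}^{+}$ in terms of $\dim S_{k}$; it says nothing about $\dim V$, and appealing to ``the dimension computation for $\mathcal{D}_{k}$ in \cite{GKZ}'' is circular, since that computation is precisely the content of the theorem being proved. Some independent argument bounding $\dim V$ from above must be supplied; in the paper this is exactly the role of Lemma \ref{lem:dsh_in_kerdelta} (every preimage of $\mathcal{P}_{k,N}^{\rm ev}$ under $\lambda$ lies in $\ker\delta$), Proposition \ref{prop:kernel_delta_star} (identifying $\ker\delta^{*}$ with $W_{w}^{\Gamma,+}$, hence $\dim\ker\delta=\dim W_{w}^{\Gamma,+}$), and Proposition \ref{prop:fA}. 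Likewise, the injectivity of your map $\Psi$ --- that no nonzero $P\in W_{w}^{+}$ has all odd-odd coefficients $q^{r,s}$ equal to zero, i.e.\ $P|_{U(1+\epsilon)}=0$ --- is asserted in a parenthesis but is not obvious; the paper proves it (Propositions \ref{prop:fA} and \ref{prop:P_R_injectivity}) from the computation $0=P|_{U(1+\epsilon)(1-S)}=P|_{(1+U+U^{2})(1-S)}-P|_{1-S}=-2P$. Without these two ingredients the converse direction does not stand as written.
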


\begin{cor}[\cite{GKZ}]
\label{cor:GKZ}Let $P(X,Y)\in W_{w}^{+}$ and $q^{r,s}\in\mathbb{Q}$
be as in Theorem \ref{thm:GKZ}. Then
\[
\sum_{\substack{r+s=k\\
r,s:{\rm odd}
}
}q^{r,s}\zeta(r,s)\in\mathbb{Q}\cdot(2\pi i)^{k}.
\]
\end{cor}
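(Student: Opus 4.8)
The plan is to deduce Corollary~\ref{cor:GKZ} from Theorem~\ref{thm:GKZ} simply by evaluation, applying the $\mathbb{Q}$-linear map $\Phi_{k,1}\colon\mathcal{D}_{k,1}\to\mathbb{C}$ of the introduction to both sides of the identity
\[
3\sum_{\substack{r+s=k\\ r,s:{\rm odd}}}q^{r,s}Z^{r,s}=-\sum_{\substack{r+s=k\\ r,s:{\rm even}}}q^{r,s}Z^{r,s}-\Bigl(\sum_{r+s=k}q^{r,s}\Bigr)Z^{k}\ \in\ \mathcal{P}_{k,1}^{{\rm ev}}.
\]
Theorem~\ref{thm:GKZ} asserts in particular that the left-hand side lies in $\mathcal{P}_{k,1}^{{\rm ev}}$, and we know $\Phi_{k,1}(\mathcal{P}_{k,1}^{{\rm ev}})\subset\mathbb{Q}\cdot(2\pi i)^{k}$; so applying $\Phi_{k,1}$ and dividing by $3$ would give $\sum_{r,s:{\rm odd}}q^{r,s}\Phi_{k,1}(Z^{r,s})\in\mathbb{Q}\cdot(2\pi i)^{k}$, where by definition $\Phi_{k,1}(Z^{r,s})=\zeta^{\shuffle}\binom{r,s}{1,1}$. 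It then remains only to rewrite each $\zeta^{\shuffle}\binom{r,s}{1,1}$ as the convergent double zeta value $\zeta(r,s)$ appearing in the corollary.

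The only point that needs care is the boundary term. For $r,s\ge1$ both odd with $r+s=k$, the index $\binom{r,s}{1,1}$ is admissible — so that $\zeta^{\shuffle}\binom{r,s}{1,1}=\zeta(r,s)$ by the agreement of the regularization with the convergent sum — except when $s=1$, which forces $r=k-1$ (recall $k$ is even). I would handle this by showing that this term is absent, i.e. $q^{k-1,1}=0$. Indeed, by the definition of the $q^{r,s}$, the quantity $q^{k-1,1}/w!$ is the coefficient of $X^{w}$ in $P(X-Y,X)|_{Y=0}=P(X,X)$, which equals $P(1,1)$ since $P$ is homogeneous of degree $w$; and $P(1,1)=0$ for every $P\in W_{w}^{+}$, because evaluating $P|_{1+U+U^{2}}=0$ at $(X,Y)=(1,1)$ gives $P(1,1)+P(0,1)+P(-1,0)=0$, evaluating $P|_{1+S}=0$ at $(1,0)$ gives $P(1,0)+P(0,1)=0$, and $P|_{\epsilon}=P$ (i.e. $P$ is even in $X$) gives $P(-1,0)=P(1,0)$, whence $P(1,1)=0$.

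Putting these together, $\Phi_{k,1}$ sends the displayed left-hand side to $3\sum_{r,s:{\rm odd}}q^{r,s}\zeta(r,s)$ — a genuine complex number, since the unique non-admissible summand has vanishing coefficient — and the right-hand side into $\mathbb{Q}\cdot(2\pi i)^{k}$, giving $\sum_{r+s=k,\,r,s:{\rm odd}}q^{r,s}\zeta(r,s)\in\mathbb{Q}\cdot(2\pi i)^{k}$. I do not expect any real obstacle here: the corollary is a formal consequence of Theorem~\ref{thm:GKZ}, and the only genuinely needed step is the vanishing $q^{k-1,1}=w!\,P(1,1)=0$, which is moreover exactly what makes the statement well-posed (without it the divergent value $\zeta(k-1,1)$ would enter the sum).
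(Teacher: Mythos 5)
Your proof is correct and is essentially the intended one: the paper states this corollary without proof (attributing it to \cite{GKZ}), and the evident deduction is exactly what you do, namely apply $\Phi_{k,1}$ to the identity of Theorem~\ref{thm:GKZ} and invoke $\Phi_{k,1}(\mathcal{P}_{k,1}^{{\rm ev}})\subset\mathbb{Q}\cdot(2\pi i)^{k}$. Your additional verification that $q^{k-1,1}=w!\,P(1,1)=0$ (via $P|_{1+S}=P|_{1+U+U^{2}}=0$ and $P|_{\epsilon}=P$), which eliminates the one non-admissible term $\zeta(k-1,1)$, is correct and addresses a convergence point the paper leaves implicit.
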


Let $A(N)$ be the set of pairs $(a,b)\in\mathbb{Z}/N\mathbb{Z}$
satisfying ${\rm gcd}(a,b,N)=1$. Then there are bijections
\[
\Gamma_{1}(N)\backslash{\rm SL}(2,\mathbb{Z})\simeq A(N)\quad;\quad\Gamma_{1}(N)\left(\begin{array}{cc}
a & b\\
c & d
\end{array}\right)\mapsto(c,d),
\]
and
\[
\Gamma_{0}(N)\backslash{\rm SL}(2,\mathbb{Z})\simeq\mathbb{P}^{1}(\mathbb{Z}/N\mathbb{Z})\quad;\quad\Gamma_{0}(N)\left(\begin{array}{cc}
a & b\\
c & d
\end{array}\right)\mapsto(c:d).
\]
For $(a,b)\in A(N)$ (resp. $\alpha\in\mathbb{P}^{1}(\mathbb{Z}/N\mathbb{Z})$),
we denote the corresponding element in $\Gamma_{1}(N)\backslash{\rm SL}(2,\mathbb{Z})$
(resp. $\Gamma_{0}(N)\backslash{\rm SL}(2,\mathbb{Z})$) by $C_{a,b}$
(resp. $C_{\alpha}$). The followings are $\Gamma=\Gamma_{1}(N)$
and $\Gamma=\Gamma_{0}(N)$ cases of the main theorem (Theorem \ref{thm:main})
of this paper, which give generalizations of Theorem \ref{thm:GKZ}.
\begin{thm}
\label{thm:main_intro}Fix $N\geq1$ and $w\geq0$, and put $k=w+2$.
Let $P\in\bar{W}_{w}^{\Gamma_{1}(N),+}$ and define $q_{a,b}^{r,s}\in\mathbb{Q}$
for $r,s,a,b$ with $r,s\geq1$, $r+s=k$, and $(a,b)\in A(N)$ by
\[
P(C_{a,-a+b})(X-Y,X)=\sum_{r,s}\frac{1}{r!s!}q_{a,b}^{r+1,s+1}X^{r}Y^{s},
\]
and put 
\[
q_{a,b}^{r,s,{\rm ev}}=\frac{1}{2}\left(q_{a,b}^{r,s}+(-1)^{r}q_{-a,b}^{r,s}\right),\quad q_{a,b}^{r,s,{\rm od}}=\frac{1}{2}\left(q_{a,b}^{r,s}-(-1)^{r}q_{-a,b}^{r,s}\right).
\]
Then
\[
q_{a,b}^{r,s,{\rm od}}=(-1)^{r+1}q_{-a,b}^{r,s,{\rm od}}=(-1)^{s+1}q_{a,-b}^{r,s,{\rm od}},\quad q_{a,b}^{r,s,{\rm ev}}=(-1)^{r}q_{-a,b}^{r,s,{\rm ev}}=(-1)^{s}q_{a,-b}^{r,s,{\rm ev}}=q_{b,a}^{s,r,{\rm ev}}
\]
and
\begin{align*}
3\sum_{r,s,a,b}q_{a,b}^{r,s,{\rm od}}Z_{a,b}^{r,s} & =-\sum_{r,s,a,b}q_{a,b}^{r,s,{\rm ev}}Z_{a,b}^{r,s}-\sum_{r,s,a,b}q_{a,b}^{r,s}Z_{a+b}^{r+s}\quad\in\mathcal{P}_{k,N}^{{\rm ev}}.
\end{align*}
Conversely, a linear combination
\[
\sum_{r,s,a,b}c_{a,b}^{r,s}Z_{a,b}^{r,s}\quad(c_{a,b}^{r,s}=(-1)^{r+1}c_{-a,b}^{r,s}=(-1)^{s+1}c_{a,-b}^{r,s})
\]
belongs to $\mathcal{P}_{k,N}^{{\rm ev}}$ if and only if $c_{a,b}^{r,s}=q_{a,b}^{r,s,{\rm od}}$
arising in this way.
\end{thm}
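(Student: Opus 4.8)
The plan is to encode the weight-$k$ generators $Z_{a,b}^{r,s}$ of $\mathcal{D}_{k,N}$ collectively as a $V_w$-valued function on $\Gamma_{1}(N)\backslash\mathrm{SL}(2,\mathbb{Z})$ and to show that, in this bookkeeping, the regularized double shuffle relations defining $\mathcal{D}_{k,N}$ become exactly the $\mathrm{GL}(2,\mathbb{Z})$-module relations cutting out $W_{w}^{\Gamma_{1}(N)}$ inside $\tilde V_{w}^{\Gamma_{1}(N)}$, with the $\pm$-eigenspace of $\epsilon$ matching $\mathcal{P}_{k,N}^{\mathrm{ev}}$. Concretely, for $P\in V_{w}^{\Gamma_{1}(N)}$ with coefficients $q_{a,b}^{r,s}$ read off from $P(C_{a,-a+b})(X-Y,X)$ as in the statement, I would first prove a ``dictionary'' lemma: the substitution $(X,Y)\mapsto(X-Y,X)$ together with the coset relabeling $C_{a,b}\mapsto C_{a,-a+b}$ is tailored so that $T=US^{-1}$ acts by the unipotent shift whose binomial coefficients are precisely the $\binom{s-1+j}{j}$ occurring in the shuffle product formula of \eqref{eq:dsh_intro}; the diagonal matrices $\pm1,\pm\epsilon$ act on labels by $(c,d)\mapsto(\pm c,\pm d)$ and on monomials by $X^{r}Y^{s}\mapsto(\pm1)^{r}X^{r}Y^{s}$; and $S$ acts on labels by $(c,d)\mapsto(-d,c)$ and on monomials essentially by $X\leftrightarrow Y$. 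Granting this, the symmetry relations in the first display of the theorem follow at once: $P|_{J}=P$ and the $\epsilon$-eigenvalue $P|_{\epsilon}=(-1)^{k}P$ (which is exactly what $P\in\bar W_{w}^{\Gamma_{1}(N),+}$ means, for both parities of $w$) give the ``sign'' symmetries of $q_{a,b}^{r,s,\mathrm{ev}}$ and $q_{a,b}^{r,s,\mathrm{od}}$, while the defining relations of $W_{w}^{\Gamma_{1}(N)}$ give the ``transposition'' symmetry $q_{a,b}^{r,s,\mathrm{ev}}=q_{b,a}^{s,r,\mathrm{ev}}$. The dictionary lemma itself is proved by matching generating functions of binomial coefficients, identical in shape to \cite{GKZ}, while keeping track of how $S,U,T,\epsilon,J$ act on the finite index set $A(N)$.

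With the dictionary in hand, the forward identity is obtained by combining the two defining relations $P|_{1+S}=0$ and $P|_{1+U+U^{2}}=0$. Writing $3P=3P-P|_{1+U+U^{2}}=P|_{(1-U)+(1-U^{2})}$, rewriting the $U$-terms through $U=TS$, and inserting $P|_{1+S}=0$, the dictionary turns $3\sum q_{a,b}^{r,s,\mathrm{od}}Z_{a,b}^{r,s}+\sum q_{a,b}^{r,s,\mathrm{ev}}Z_{a,b}^{r,s}+\sum q_{a,b}^{r,s}Z_{a+b}^{k}$ --- using the double shuffle relations that already hold in $\mathcal{D}_{k,N}$, so that this element is not a symmetric-coefficient combination of the $Z_{a,b}^{r,s}$ but literally equals a combination of the generators of $\mathcal{P}_{k,N}^{\mathrm{ev}}$ --- into an explicit $\mathbb{Q}$-linear combination of $P_{a,b}^{r,s}+(-1)^{r}P_{-a,b}^{r,s}+(-1)^{s}P_{a,-b}^{r,s}+(-1)^{r+s}P_{-a,-b}^{r,s}$ and $Z_{a}^{k}+(-1)^{k}Z_{-a}^{k}$. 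This yields the claimed membership, and the factor $3$ appears, exactly as in \cite{GKZ}, from the order-three element $U$ (the $\mathbb{Z}/3$ factor of $\mathrm{PSL}(2,\mathbb{Z})$) via $P|_{1+U+U^{2}}=0$.

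For the converse it suffices to show that $P\mapsto(q_{a,b}^{r,s,\mathrm{od}})$ is a bijection from $\bar W_{w}^{\Gamma_{1}(N),+}$ onto the space $R$ of tuples $(c_{a,b}^{r,s})$ with $c_{a,b}^{r,s}=(-1)^{r+1}c_{-a,b}^{r,s}=(-1)^{s+1}c_{a,-b}^{r,s}$ and $\sum c_{a,b}^{r,s}Z_{a,b}^{r,s}\in\mathcal{P}_{k,N}^{\mathrm{ev}}$. Injectivity follows from the observation that if all $q_{a,b}^{r,s,\mathrm{od}}$ vanish, then the remaining symmetries together with $P|_{1+U+U^{2}}=0$ force $P=0$ (again a binomial identity in the style of \cite{GKZ}); equivalently, the ``odd'' part of $P(C)(X-Y,X)$ already determines $P\in\bar W_{w}^{\Gamma_{1}(N),+}$. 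Surjectivity is the crux and is where the general level genuinely enters: one must rule out ``exotic'' $(\mathrm{odd},\mathrm{odd})$-type relations in $\mathcal{P}_{k,N}^{\mathrm{ev}}$, i.e. prove $\dim R=\dim\bar W_{w}^{\Gamma_{1}(N),+}$. I would do this by a dimension count --- computing $\dim\bar W_{w}^{\Gamma_{1}(N),+}$ from Popa--Pasol's Eichler--Shimura isomorphism $S_{w+2}(\Gamma_{1}(N))\simeq(W_{w}^{\Gamma_{1}(N),\pm}/C_{w}^{\Gamma_{1}(N),\pm})\otimes\mathbb{C}$ together with the explicit description of $C_{w}^{\Gamma_{1}(N),\pm}$ and the Eisenstein/boundary contribution, and computing $\dim R$ by running the double shuffle relations through the dictionary and bounding the relevant rank --- or, alternatively, by constructing the inverse explicitly: from a relation $\sum c_{a,b}^{r,s}Z_{a,b}^{r,s}\in\mathcal{P}_{k,N}^{\mathrm{ev}}$ one reconstructs a candidate $P\colon\Gamma_{1}(N)\backslash\mathrm{SL}(2,\mathbb{Z})\to V_{w}$ (its ``even'' part forced by injectivity) and verifies $P|_{1+S}=P|_{1+U+U^{2}}=0$ and $P|_{\epsilon}=(-1)^{k}P$ directly. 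The main technical obstacle throughout is to carry the GKZ combinatorics uniformly over $A(N)$ so that, at each step, no double shuffle relation is lost and none is spuriously created.
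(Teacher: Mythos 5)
Your first two paragraphs track the paper's actual proof quite closely: the ``dictionary'' is realized in the paper by the maps $\lambda_{C}$, $\lambda$, $\lambda^{\mathcal{S}}$, $\lambda^{\mathcal{P}}$ and the basis elements $Q_{a,b}^{r,s}$, under which the double shuffle relations become $\lambda^{\mathcal{P}}(P)=\lambda(\left.P\right|_{1+\epsilon S})+\lambda^{\mathcal{S}}(P)=\lambda(\left.P\right|_{(1+\epsilon S)T})$, and the forward identity is obtained by essentially the manipulation you describe (the paper sets $Q=\left.P\right|_{U}$ and computes $\lambda(\left.P\right|_{U(1+\epsilon S)(1-T)})=\lambda(\left.P\right|_{U(2+\epsilon)})$ using the period-polynomial relations; your $3P=\left.P\right|_{(1-U)+(1-U^{2})}$ computation is the same idea in a slightly different order). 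The symmetry assertions likewise come out of $Q^{\pm}$ being $\epsilon$-eigenvectors and $\left.Q^{-}\right|_{\epsilon S}=Q^{-}$, as you indicate. Up to here the outline is sound, modulo the unverified combinatorial bookkeeping over $A(N)$.

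The gap is in the converse. You correctly identify surjectivity, i.e.\ the upper bound $\dim R\leq\dim\bar{W}_{w}^{\Gamma_{1}(N),+}$, as the crux, but neither of your proposed routes is a proof. Computing $\dim\bar{W}_{w}^{\Gamma_{1}(N),+}$ from Popa--Pasol's Eichler--Shimura isomorphism supplies only one side of the comparison; the hard side is $\dim R$, the dimension of the space of (odd,odd)-type coefficient tuples whose associated combination lies in $\mathcal{P}_{k,N}^{{\rm ev}}$, and ``running the double shuffle relations through the dictionary and bounding the relevant rank'' is a restatement of the problem rather than a method. Likewise, ``reconstructing a candidate $P$ from a relation and verifying the period relations directly'' presupposes exactly what must be shown, namely that every such relation is of period-polynomial origin. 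The paper's mechanism here is the essential new input and is purely algebraic, with no recourse to cusp-form dimension formulas: it introduces the map $\delta:\bar{V}_{w}^{\Gamma}\to\bar{V}_{w}^{\Gamma}/\bar{V}_{w}^{\Gamma,-}$, $\delta(P)=(\left.P\right|_{1+SU^{2}S-SU}\bmod\bar{V}_{w}^{\Gamma,-})$, proves the inclusion $\lambda^{-1}(\mathcal{P}_{k,N}^{{\rm ev}})\subset\ker\delta$ (Lemma \ref{lem:dsh_in_kerdelta}), identifies $\ker\delta^{*}$ with $W_{w}^{\Gamma,+}$ (Proposition \ref{prop:kernel_delta_star}), and then invokes the general duality $\dim_{\mathbb{Q}}\mathcal{W}^{+}(A)=\dim_{\mathbb{Q}}\mathcal{W}^{+}(A^{\vee})$ (Proposition \ref{prop:fA}) to conclude $\dim_{\mathbb{Q}}\lambda^{-1}(\mathcal{P}_{k,N}^{{\rm ev}})\leq\dim_{\mathbb{Q}}\ker\delta=\dim_{\mathbb{Q}}W_{w}^{\Gamma,+}=\dim_{\mathbb{Q}}\bar{W}_{w}^{\Gamma,+}$. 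Without this (or an equivalent) argument, the ``only if'' half of your proof does not close.
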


\begin{thm}
\label{thm:main_intro_gamma0}Fix $N\geq1$ and $w\geq0$, and put
$k=w+2$. Assume that $w$ is even. For $\alpha\in\mathbb{P}^{1}(\mathbb{Z}/N\mathbb{Z})$
and positive integers $r,s$ with $r+s=k$, define $Z_{\alpha}^{r,s}\in\mathcal{D}_{k,N}$
and $Z_{\alpha}^{k}$ by
\[
Z_{\alpha}^{r,s}=\sum_{\substack{a,b\in\mathbb{Z}/N\mathbb{Z}\\
(a:b)=\alpha
}
}Z_{a,b}^{r,s},\qquad Z_{\alpha}^{k}=\sum_{\substack{a,b\in\mathbb{Z}/N\mathbb{Z}\\
(a:b)=\alpha
}
}Z_{a+b}^{k}.
\]
Fix $P\in\bar{W}_{w}^{\Gamma_{1}(N),+}$. Define rational numbers
\[
q_{\alpha}^{r,s}\in\mathbb{Q}\qquad\qquad(r,s\geq1,\ r+s=k,\ \alpha\in\mathbb{P}^{1}(\mathbb{Z}/N\mathbb{Z}))
\]
by 
\[
P(C_{(a:-a+b)})(X-Y,X)=\sum_{r,s}\frac{1}{r!s!}q_{(a:b)}^{r+1,s+1}X^{r}Y^{s},
\]
and put 
\[
q_{\alpha}^{r,s,{\rm ev}}=\frac{1}{2}\left(q_{\alpha}^{r,s}+(-1)^{r}q_{-\alpha}^{r,s}\right),\quad q_{\alpha}^{r,s,{\rm od}}=\frac{1}{2}\left(q_{\alpha}^{r,s}-(-1)^{r}q_{-\alpha}^{r,s}\right).
\]
Then
\[
q_{\alpha}^{r,s,{\rm od}}=(-1)^{r+1}q_{-\alpha}^{r,s,{\rm od}},\quad q_{\alpha}^{r,s,{\rm ev}}=(-1)^{r}q_{-\alpha}^{r,s,{\rm ev}},\quad q_{(a:b)}^{r,s,{\rm ev}}=q_{(b:a)}^{s,r,{\rm ev}}
\]
and
\begin{align*}
3\sum_{r,s,\alpha}q_{\alpha}^{r,s,{\rm od}}Z_{\alpha}^{r,s} & =-\sum_{r,s,\alpha}q_{\alpha}^{r,s,{\rm ev}}Z_{\alpha}^{r,s}-\sum_{r,s,\alpha}q_{\alpha}^{r,s}Z_{\alpha}^{r+s}\quad\in\mathcal{P}_{k,N}^{{\rm ev}}.
\end{align*}
Conversely, a linear combination
\[
\sum_{r,s,\alpha}c_{\alpha}^{r,s}Z_{\alpha}^{r,s}\quad(c_{\alpha}^{r,s}=(-1)^{r+1}c_{-\alpha}^{r,s})
\]
belongs to $\in\mathcal{P}_{k,N}^{{\rm ev}}$ if and only if $c_{\alpha}^{r,s}=q_{\alpha}^{r,s,{\rm od}}$
arising in this way.
\end{thm}

As corollaries, we have the following generalizations of Corollary
\ref{cor:GKZ}:
\begin{cor}
\label{cor:intro_main}Fix $N\geq1$ and $w\geq0$ with $(N,w)\neq(1,0)$.
Put $k=w+2$. Let $P\in\bar{W}_{w}^{\Gamma_{1}(N),+}$ and $q_{a,b}^{r,s,{\rm od}}\in\mathbb{Q}$
be as in Theorem \ref{thm:main_intro}. Then
\[
\sum_{r,s,a,b}q_{a,b}^{r,s,{\rm od}}\zeta^{\shuffle}{r,s \choose \zeta_{N}^{a},\zeta_{N}^{b}}\in\mathbb{Q}\cdot(2\pi i)^{k}.
\]
\end{cor}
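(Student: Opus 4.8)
\emph{Proof proposal.} The plan is to derive this corollary as an immediate consequence of Theorem~\ref{thm:main_intro}, by pushing the identity it provides in the formal double zeta space $\mathcal{D}_{k,N}$ forward along the realization map $\Phi_{k,N}$.

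First I would observe that $w\geq1$ forces $k=w+2\geq3$, so in particular $(k,N)\neq(2,1)$ and the objects $\mathcal{D}_{k,N}$, $\mathcal{P}_{k,N}^{{\rm ev}}$, and $\Phi_{k,N}\colon\mathcal{D}_{k,N}\to\mathbb{C}$ are all defined. Recall from the introduction that $\Phi_{k,N}$ is a well-defined $\mathbb{Q}$-linear map (by virtue of the regularized double shuffle relations $(\ref{eq:dsh_intro})$), that $\Phi_{k,N}(Z_{a,b}^{r,s})=\zeta^{\shuffle}\binom{r,s}{\zeta_{N}^{a},\zeta_{N}^{b}}$, and that $\Phi_{k,N}(\mathcal{P}_{k,N}^{{\rm ev}})\subset\mathbb{Q}\cdot(2\pi i)^{k}$ (a consequence of $(\ref{eq:euler_intro})$).

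Next, I would apply Theorem~\ref{thm:main_intro} to the given $P\in\bar{W}_{w}^{\Gamma_{1}(N),+}$: it yields that the element
\[
\Xi\coloneqq 3\sum_{r,s,a,b}q_{a,b}^{r,s,{\rm od}}Z_{a,b}^{r,s}=-\sum_{r,s,a,b}q_{a,b}^{r,s,{\rm ev}}Z_{a,b}^{r,s}-\sum_{r,s,a,b}q_{a,b}^{r,s}Z_{a+b}^{r+s}
\]
lies in $\mathcal{P}_{k,N}^{{\rm ev}}\subset\mathcal{D}_{k,N}$. Here one should note that $\Xi$ is a genuine $\mathbb{Q}$-linear combination of the generators of $\mathcal{D}_{k,N}$: the pairs $(a,b)$ appearing come from $A(N)$, hence satisfy $\gcd(a,b,N)=1$, and $r,s\geq1$ with $r+s=k$, which is exactly the admissible index set. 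Applying $\Phi_{k,N}$ and using $\Phi_{k,N}(\mathcal{P}_{k,N}^{{\rm ev}})\subset\mathbb{Q}\cdot(2\pi i)^{k}$ gives
\[
3\sum_{r,s,a,b}q_{a,b}^{r,s,{\rm od}}\,\zeta^{\shuffle}\binom{r,s}{\zeta_{N}^{a},\zeta_{N}^{b}}=\Phi_{k,N}(\Xi)\in\mathbb{Q}\cdot(2\pi i)^{k},
\]
and dividing by $3$ (which preserves the $\mathbb{Q}$-line $\mathbb{Q}\cdot(2\pi i)^{k}$) proves the claim.

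There is essentially no obstacle here: all of the real work is already contained in Theorem~\ref{thm:main_intro} and in the previously established properties of $\Phi_{k,N}$. The only thing that deserves a word is the bookkeeping check above, namely that the formal combination $\Xi$ really lies in $\mathcal{D}_{k,N}$, which is immediate from the definition of $q_{a,b}^{r,s}$ in Theorem~\ref{thm:main_intro}.
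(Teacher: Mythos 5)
Your proposal is correct and is exactly the argument the paper intends (the paper states this corollary without writing out a proof, treating it as an immediate consequence of Theorem \ref{thm:main_intro} together with $\Phi_{k,N}(\mathcal{P}_{k,N}^{{\rm ev}})\subset\mathbb{Q}\cdot(2\pi i)^{k}$). Your extra bookkeeping remarks — that $w\geq1$ guarantees $(k,N)\neq(2,1)$ so $\mathcal{D}_{k,N}$ and $\Phi_{k,N}$ are defined, and that the indices of $\Xi$ are admissible — are sound and only make explicit what the paper leaves implicit.
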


\begin{cor}
\label{cor:intro_main_gamma0}Fix $N\geq1$ and even $w\geq0$ with
$(N,w)\neq(1,0)$. Put $k=w+2$. Let $P\in\bar{W}_{w}^{\Gamma_{0}(N),+}$
and $q_{\alpha}^{r,s,{\rm od}}\in\mathbb{Q}$ be as in Theorem \ref{thm:main_intro}.
Then
\[
\sum_{r,s,\alpha}q_{\alpha}^{r,s,{\rm od}}L{r,s \choose \alpha}\in\mathbb{Q}\cdot(2\pi i)^{k}
\]
where
\[
L{r,s \choose \alpha}:=\sum_{\substack{a,b\in\mathbb{Z}/N\mathbb{Z}\\
(a:b)=\alpha
}
}\zeta^{\shuffle}{r,s \choose \zeta_{N}^{a},\zeta_{N}^{b}}.
\]
\end{cor}

Note that $L{r,s \choose \alpha}$ coincides with $L(b)$ if $N$
is a prime and $\alpha=(1:b)$, and thus, Corollary \ref{cor:intro_main_gamma0}
is also a generalization of (\ref{eq:exa_Lsum_11}).

\section{Proof of main theorem}

In this section, we give a proof of Theorem \ref{thm:main_intro}.

\subsection{The dual vector space of $V_{w}^{\Gamma}$}

Fix an integer $w\ge0$ and a congruence subgroup $\Gamma$ of ${\rm SL}(2,\mathbb{Z})$
such that $\epsilon\Gamma\epsilon=\Gamma$. We denote by $\tilde{\bar{V}}_{w}^{\Gamma}$
(resp. $\bar{V}_{w}^{\Gamma}$) a ${\rm GL}(2,\mathbb{Z})$-module
which is just a copy of $\tilde{V}_{w}^{\Gamma}$ (resp. $V_{w}^{\Gamma}$)
as a ${\rm SL}(2,\mathbb{Z})$-module, but the action of $\epsilon$
is defined by
\[
\left.P\right|_{\epsilon}(C)=\left.P(J\epsilon C\epsilon)\right|_{\epsilon}
\]
for $C\in\Gamma\backslash{\rm SL}(2,\mathbb{Z})$. Note that $\bar{W}_{w}^{\Gamma,\pm}$
is naturally identified with
\[
\{P\in\bar{V}_{w}^{\Gamma}:\left.P\right|_{1+S}=\left.P\right|_{1+U+U^{2}}=0,\,\left.P\right|_{\epsilon}=\pm P\}.
\]
Following \cite{Pasol_Popa_periodpolynomial}, define pairings on
$V_{w}\times V_{w}$ and $V_{w}^{\Gamma}\times\bar{V}_{w}^{\Gamma}$
by 
\[
\langle\sum_{r=0}^{w}a_{r}X^{r}Y^{w-r},\sum_{r=0}^{w}b_{r}X^{r}Y^{w-r}\rangle=\sum_{r=0}^{w}(-1)^{w-r}{w \choose r}^{-1}a_{w}b_{w-r}
\]
and
\[
\langle\langle P,Q\rangle\rangle=\frac{1}{[{\rm SL}(2,\mathbb{Z}):\Gamma]}\sum_{C\in\Gamma\backslash{\rm SL}(2,\mathbb{Z})}\langle P(C),Q(C)\rangle\qquad(P\in V_{w}^{\Gamma},Q\in\bar{V}_{w}^{\Gamma}).
\]
By this pairing, we regard $\bar{V}_{w}^{\Gamma}$ as the dual vector
space of $V_{w}^{\Gamma}$. Then for $P,Q\in V_{w}$ and $g\in{\rm GL}(2,\mathbb{Z})$,
we have $\langle\left.P\right|_{g},Q\rangle=\langle P,\left.Q\right|_{g^{\vee}}\rangle$
where $g^{\vee}=\det(g)g^{-1}\in{\rm GL}(2,\mathbb{Z})$ (see \cite[Section 3]{Pasol_Popa_periodpolynomial}).
Thus, we have
\[
\langle\langle\left.P\right|_{g},Q\rangle\rangle=\langle\langle P,\left.Q\right|_{g^{-1}}\rangle\rangle
\]
for $P\in V_{w}^{\Gamma}$, $Q\in\bar{V}_{w}^{\Gamma}$, and $g\in{\rm GL}(2,\mathbb{Z})$.

\subsection{Properties of finite dimensional vector spaces with $\mathrm{PGL}(2,\mathbb{Z})$-action}

Let $A$ be a finite dimensional $\mathbb{Q}$-vector space with a
right $\mathrm{PGL}(2,\mathbb{Z})$-action. For such $A$, we define
the subspaces $\mathcal{W}(A)$, $A^{\pm}$ and $\mathcal{W}^{\pm}(A)$
of $A$ by
\[
\mathcal{W}(A)=\{P\in A:\left.P\right|_{1+S}=\left.P\right|_{(1+U+U^{2})(1-S)}=0\},
\]
\[
A^{\pm}=\{P\in A:\left.P\right|_{\epsilon}=\pm P\},
\]
\[
\mathcal{W}^{\pm}(A)=\mathcal{W}(A)\cap A^{\pm}.
\]
Note that $\mathcal{W}(A)=\mathcal{W}^{+}(A)\oplus\mathcal{W}^{-}(A)$
since $\epsilon$ preserves $\mathcal{W}(A)$. Furthermore, we denote
by $A^{\vee}$ the dual vector space of $A$. We also regard $A^{\vee}$
as a right $\mathrm{PGL}(2,\mathbb{Z})$-module by the action
\begin{equation}
\langle\left.P\right|_{g},Q\rangle=\langle P,\left.Q\right|_{g^{-1}}\rangle\qquad(P\in A^{\vee},Q\in A,g\in\mathrm{PGL}(2,\mathbb{Z})).\label{eq:dual_action}
\end{equation}
When $A=V_{w}^{\Gamma}$ or $\bar{V}_{w}^{\Gamma}$, $\mathcal{W}^{\pm}(A)$
coincides with the set of period polynomials.
\begin{prop}
\label{prop:W_general}We have
\[
\mathcal{W}^{\pm}(V_{w}^{\Gamma})=W_{w}^{\Gamma,\pm},
\]
and
\[
\mathcal{W}^{\pm}(\bar{V}_{w}^{\Gamma})=\bar{W}_{w}^{\Gamma,\pm}.
\]
\end{prop}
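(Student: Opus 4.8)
The plan is to unwind the definitions and check that the two characterizations of period polynomials agree. Recall that $W_{w}^{\Gamma}$ is defined inside $V_{w}^{\Gamma}$ by the vanishing conditions $\left.P\right|_{1+S}=0$ and $\left.P\right|_{1+U+U^{2}}=0$, whereas $\mathcal{W}(V_{w}^{\Gamma})$ is defined by $\left.P\right|_{1+S}=0$ and $\left.P\right|_{(1+U+U^{2})(1-S)}=0$. Clearly $\left.P\right|_{1+U+U^{2}}=0$ implies $\left.P\right|_{(1+U+U^{2})(1-S)}=0$, so $W_{w}^{\Gamma}\subseteq\mathcal{W}(V_{w}^{\Gamma})$, and intersecting with the $\pm$-eigenspace of $\epsilon$ gives $W_{w}^{\Gamma,\pm}\subseteq\mathcal{W}^{\pm}(V_{w}^{\Gamma})$. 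For the reverse inclusion, suppose $\left.P\right|_{1+S}=0$ and $\left.P\right|_{(1+U+U^{2})(1-S)}=0$. The key point is that $S$ and $U$ satisfy $S^{2}=J$, $U^{3}=J$ (or $=1$ in $\mathrm{PGL}(2,\mathbb{Z})$), so that on the space $V_{w}^{\Gamma}$, where $\left.P\right|_{J}=P$, the relation $\left.P\right|_{1+S}=0$ says $\left.P\right|_{S}=-P$, i.e.\ $\left.P\right|_{1-S}=2P$. Therefore applying $(1+U+U^{2})$ on the left to $\left.P\right|_{1-S}=2P$, the condition $\left.P\right|_{(1+U+U^{2})(1-S)}=0$ becomes $2\left.P\right|_{1+U+U^{2}}=0$, hence $\left.P\right|_{1+U+U^{2}}=0$. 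This establishes $\mathcal{W}^{\pm}(V_{w}^{\Gamma})\subseteq W_{w}^{\Gamma,\pm}$, completing the first equality.

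For the second equality I would argue identically, replacing $V_{w}^{\Gamma}$ by $\bar{V}_{w}^{\Gamma}$. The only thing to verify is that the argument above used nothing about the $\epsilon$-action: all the manipulations took place inside the $\mathrm{SL}(2,\mathbb{Z})$-module structure (the matrices $S$, $U$, $J$ lie in $\mathrm{SL}(2,\mathbb{Z})$), and $\bar{V}_{w}^{\Gamma}$ agrees with $V_{w}^{\Gamma}$ as an $\mathrm{SL}(2,\mathbb{Z})$-module. In particular $\left.P\right|_{J}=P$ still defines the relevant subspace, and the definitions of $\mathcal{W}(\bar{V}_{w}^{\Gamma})$ and of $\bar{W}_{w}^{\Gamma,\pm}$ (as recalled at the start of this subsection, $\bar{W}_{w}^{\Gamma,\pm}=\{P\in\bar{V}_{w}^{\Gamma}:\left.P\right|_{1+S}=\left.P\right|_{1+U+U^{2}}=0,\ \left.P\right|_{\epsilon}=\pm P\}$) are literally the $\epsilon$-twisted analogues of those for $V_{w}^{\Gamma}$. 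So the same chain of implications yields $\mathcal{W}^{\pm}(\bar{V}_{w}^{\Gamma})=\bar{W}_{w}^{\Gamma,\pm}$.

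The only potential subtlety — and the step I would be most careful about — is the bookkeeping of the relation $S^{2}=J$ versus $S^{2}=1$ and whether one works in $\mathrm{SL}(2,\mathbb{Z})$ or $\mathrm{PGL}(2,\mathbb{Z})$; since $\mathcal{W}(A)$ is defined for a $\mathrm{PGL}(2,\mathbb{Z})$-module $A$ while $W_{w}^{\Gamma}$ and $V_{w}^{\Gamma}$ are a priori $\mathrm{GL}(2,\mathbb{Z})$-modules, one must observe that $J$ acts trivially on $V_{w}^{\Gamma}$ by definition (this is exactly the condition $\left.P\right|_{J}=P$ cutting out $V_{w}^{\Gamma}$ inside $\tilde{V}_{w}^{\Gamma}$), so that the $\mathrm{GL}(2,\mathbb{Z})$-action descends to a $\mathrm{PGL}(2,\mathbb{Z})$-action and the two notions are compatible. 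Once this is noted, the equivalence of the relation sets $\{1+S,\ 1+U+U^{2}\}$ and $\{1+S,\ (1+U+U^{2})(1-S)\}$ on such modules is the elementary computation sketched above, and no further input is needed.
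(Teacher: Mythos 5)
Your easy inclusion $W_{w}^{\Gamma,\pm}\subseteq\mathcal{W}^{\pm}(V_{w}^{\Gamma})$ and the reduction to the $\mathrm{SL}(2,\mathbb{Z})$-module structure (so that the barred case follows identically) are both fine, but the reverse inclusion is where the content of the proposition lies, and your argument for it breaks down. The action is a right action, so for group-algebra elements $A,B$ one has $\left.P\right|_{AB}=\left.(\left.P\right|_{A})\right|_{B}$; hence the hypothesis $\left.P\right|_{(1+U+U^{2})(1-S)}=0$ says that $Q\coloneqq\left.P\right|_{1+U+U^{2}}$ satisfies $\left.Q\right|_{S}=Q$. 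Your manipulation instead computes $\left.P\right|_{(1-S)(1+U+U^{2})}=2\left.P\right|_{1+U+U^{2}}$, which involves a different group-algebra element: $(1+U+U^{2})(1-S)=1+U+U^{2}-S-US-U^{2}S$, whereas $(1-S)(1+U+U^{2})=1+U+U^{2}-S-SU-SU^{2}$, and from $\left.P\right|_{S}=-P$ you cannot conclude $\left.P\right|_{US}=-\left.P\right|_{U}$. So the step ``the condition becomes $2\left.P\right|_{1+U+U^{2}}=0$'' is unjustified, and with it the whole reverse inclusion; the argument silently treats the group algebra as commutative.

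The reverse inclusion is not a formal identity and genuinely uses the structure of $V_{w}^{\Gamma}$. The paper sets $Q=\left.P\right|_{1+U+U^{2}}$ and observes $\left.Q\right|_{U}=Q$ (since $U^{3}=J$ acts trivially on $V_{w}^{\Gamma}$) and $\left.Q\right|_{S}=Q$ (this is exactly the second defining condition of $\mathcal{W}$), so $Q$ is invariant under all of $\mathrm{SL}(2,\mathbb{Z})$. For $w>0$, invariance under a suitable power $T^{N}$ stabilizing every coset forces $Q(C)\in\mathbb{Q}Y^{w}$, while invariance under $S$ forces $Q(C)\in\mathbb{Q}X^{w}$, whence $Q=0$; for $w=0$ a separate averaging argument over $\Gamma\backslash\mathrm{SL}(2,\mathbb{Z})$ using $\left.P\right|_{1+S}=0$ shows the constant value of $Q$ vanishes. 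This is precisely why the author defines $\mathcal{W}(A)$ with the weaker relation $(1+U+U^{2})(1-S)$ for a general $\mathrm{PGL}(2,\mathbb{Z})$-module and then proves the coincidence only for $A=V_{w}^{\Gamma}$ and $\bar{V}_{w}^{\Gamma}$: for an arbitrary module the two relation sets are not equivalent, so no purely formal computation can close the gap.
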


\begin{proof}
Note that it is enough to show that $\mathcal{W}(V_{w}^{\Gamma})=W_{w}^{\Gamma}$.
Recall that the definitions of $\mathcal{W}(V_{w}^{\Gamma})$ and
$W_{w}^{\Gamma}$ are given by
\begin{align*}
\mathcal{W}(V_{w}^{\Gamma}) & =\{P\in V_{w}^{\Gamma}:\left.P\right|_{1+S}=0,\,\left.P\right|_{(1+U+U^{2})(1-S)}=0\},\\
W_{w}^{\Gamma} & =\{P\in V_{w}^{\Gamma}:\left.P\right|_{1+S}=0,\,\left.P\right|_{1+U+U^{2}}=0\}.
\end{align*}
Thus the proposition is equivalent to $\left.P\right|_{1+U+U^{2}}=0$
for $P\in\mathcal{W}(V_{w}^{\Gamma})$. Let $P$ be any element of
$\mathcal{W}(V_{w}^{\Gamma})$ and put $Q=\left.P\right|_{1+U+U^{2}}$.
Then
\[
\left.Q\right|_{S}=\left.Q\right|_{U}=Q.
\]
Since ${\rm SL}(2,\mathbb{Z})$ is generated by $S$ and $U$, we
have $Q=\left.Q\right|_{g}$ for $g\in{\rm SL}(2,\mathbb{Z})$. Since
$\Gamma$ is a finite index subgroup of ${\rm SL}(2,\mathbb{Z})$,
there exists a positive integer $n$ such that $CT^{n}=C$ for any
$C\in\Gamma\backslash{\rm SL}(2,\mathbb{Z})$. Then, for $C\in\Gamma\backslash{\rm SL}(2,\mathbb{Z})$,
we have
\[
Q(C)=\left.Q\right|_{T^{n}}(C)=\left.Q(CT^{-n})\right|_{T^{n}}=\left.Q(C)\right|_{T^{n}},
\]
which implies 
\[
Q(C)\in\mathbb{Q}Y^{w}.
\]
Furthermore, we also have
\[
Q(C)=\left.Q\right|_{S}(C)=\left.Q(CS^{-1})\right|_{S}\in\mathbb{Q}X^{w}.
\]
Thus $Q=0$ if $w>0$. Hence the case $w>0$ is proved. Assume that
$w=0$. Then $Q(C)=Q(C')$ for any $C,C'\in\Gamma\backslash{\rm SL}(2,\mathbb{Z})$
since

\[
P(C)=\left.P\right|_{g}(C)=P(Cg^{-1})=P(C')
\]
where $g$ is an element of ${\rm SL}(2,\mathbb{Z})$ satisfying $Cg^{-1}=C'$.
Let $\alpha\coloneqq Q(C)$ which does not depend on the choice of
$C$. Then
\begin{align*}
\alpha & =\frac{1}{[{\rm SL}(2,\mathbb{Z}):\Gamma]}\sum_{C\in\Gamma\backslash{\rm SL}(2,\mathbb{Z})}Q(C)\\
 & =\frac{1}{[{\rm SL}(2,\mathbb{Z}):\Gamma]}\sum_{C\in\Gamma\backslash{\rm SL}(2,\mathbb{Z})}\left.P\right|_{1+U+U^{2}}(C)\\
 & =\frac{3}{[{\rm SL}(2,\mathbb{Z}):\Gamma]}\sum_{C\in\Gamma\backslash{\rm SL}(2,\mathbb{Z})}P(C)\\
 & =\frac{3}{2[{\rm SL}(2,\mathbb{Z}):\Gamma]}\sum_{C\in\Gamma\backslash{\rm SL}(2,\mathbb{Z})}\left.P\right|_{1+S}(C)\\
 & =0.
\end{align*}
Thus $Q=0$, which completes the proof.
\end{proof}
\begin{prop}
\label{prop:fA}Let $f_{A}:A^{+}\to A/A^{-}$ be a map defined by
\[
f_{A}(P)=(\left.P\right|_{1+U-U^{2}S}\bmod A^{-}),
\]
$B=A^{\vee}$ the dual vector space of $A$, and $f_{A}^{*}:B^{+}\to B/B^{-}$
the dual homomorphism of $f$. Then 
\[
\ker f_{A}^{*}=\mathcal{W}^{+}(B).
\]
Furthermore, there is an bijection
\[
\iota:\mathcal{W}^{+}(A)\simeq\ker(f_{A})
\]
given by $\iota(P)=\left.P\right|_{U(1+\epsilon)}$, and thus $\dim_{\mathbb{Q}}\mathcal{W}^{+}(A)=\dim_{\mathbb{Q}}\mathcal{W}^{+}(B)$.
\end{prop}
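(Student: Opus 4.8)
The plan is to make the duality between $A$ and $B=A^{\vee}$ precise, to transpose $f_{A}$ so that the first assertion becomes an identity in $\mathbb{Q}[\mathrm{PGL}(2,\mathbb{Z})]$, and then to handle $\iota$ by a group-algebra computation together with a dimension count. Since $\epsilon^{2}=1$ in $\mathrm{PGL}(2,\mathbb{Z})$, every finite dimensional $\mathrm{PGL}(2,\mathbb{Z})$-module splits into $\pm$-eigenspaces of $\epsilon$, so $A=A^{+}\oplus A^{-}$ and $B=B^{+}\oplus B^{-}$; the equivariance \eqref{eq:dual_action} gives $\langle\left.P\right|_{\epsilon},Q\rangle=\langle P,\left.Q\right|_{\epsilon}\rangle$, whence for $P\in B^{+}$, $Q\in A^{-}$ one gets $\langle P,Q\rangle=-\langle P,Q\rangle$. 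Thus $B^{+}\subseteq\mathrm{Ann}_{B}(A^{-})$ and $B^{-}\subseteq\mathrm{Ann}_{B}(A^{+})$, and comparing dimensions forces $B^{\pm}=\mathrm{Ann}_{B}(A^{\mp})$ and $\dim A^{\pm}=\dim B^{\pm}$. The pairing then identifies $(A/A^{-})^{\vee}$ with $B^{+}$ and $(A^{+})^{\vee}$ with $B/B^{-}$, which is exactly what makes $f_{A}^{*}$ a map $B^{+}\to B/B^{-}$.

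For $P\in B^{+}$, regarded as a functional on $A/A^{-}$, the functional $f_{A}^{*}(P)$ on $A^{+}$ sends $Q$ to $\langle P,\left.Q\right|_{1+U-U^{2}S}\rangle=\langle\left.P\right|_{h},Q\rangle$, where $h$ is the image of $1+U-U^{2}S$ under the anti-automorphism $g\mapsto g^{-1}$; since $S^{2}=U^{3}=1$ in $\mathrm{PGL}(2,\mathbb{Z})$ we have $h=1+U^{2}-SU$, so $\ker f_{A}^{*}=\{P\in B^{+}:\left.P\right|_{(1+U^{2}-SU)(1+\epsilon)}=0\}$. The heart of the matter is the identity, valid whenever $\left.P\right|_{\epsilon}=P$,
\[
\left.P\right|_{(1+U^{2}-SU)(1+\epsilon)}=\left.P\right|_{(1+U+U^{2})(1-S)}+\left.\bigl(\left.P\right|_{1+S}\bigr)\right|_{1-U+US},
\]
checked by pushing $\epsilon$ to the right in each term on the left via $\epsilon S\epsilon=S^{-1}$, $\epsilon U\epsilon=SU^{-1}S$; both sides collapse to $\left.P\right|_{2+U^{2}+SUS-SU-U^{2}S}$. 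Granting the identity, $\mathcal{W}^{+}(B)\subseteq\ker f_{A}^{*}$ is immediate. Conversely, if $P\in\ker f_{A}^{*}$ then $\left.P\right|_{(1+U+U^{2})(1-S)}=-\left.\bigl(\left.P\right|_{1+S}\bigr)\right|_{1-U+US}$; applying $\left.\cdot\right|_{1+S}$ and using $(1-S)(1+S)=0$, $(1-U+US)(1+S)=1+S$, and $\left.\bigl(\left.P\right|_{1+S}\bigr)\right|_{1+S}=2\left.P\right|_{1+S}$, we get $\left.P\right|_{1+S}=0$, and then $\left.P\right|_{(1+U+U^{2})(1-S)}=0$, so $P\in\mathcal{W}^{+}(B)$. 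This proves $\ker f_{A}^{*}=\mathcal{W}^{+}(B)$.

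For $\iota$: $(1+\epsilon)\epsilon=1+\epsilon$ gives $\left.\iota(P)\right|_{\epsilon}=\iota(P)$, so $\iota$ takes values in $A^{+}$. A computation parallel to the one above yields $(1+\epsilon)(1+U-U^{2}S)(1+\epsilon)=M(1+\epsilon)$ with $M=2+U-U^{2}S+SU^{-1}S-SU$; hence for $P\in\mathcal{W}^{+}(A)$ we have $\left.\iota(P)\right|_{(1+U-U^{2}S)(1+\epsilon)}=\left.P\right|_{UM(1+\epsilon)}=\left.\bigl(\left.P\right|_{UM}\bigr)\right|_{1+\epsilon}$, and expanding $\left.P\right|_{UM}$ by means of $\left.P\right|_{1+S}=0$ and $\left.P\right|_{(1+U+U^{2})(1-S)}=0$ shows it lies in $A^{-}$, i.e. $\iota(P)\in\ker f_{A}$. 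For injectivity: if $\iota(P)=0$, applying $\left.\cdot\right|_{U^{-1}}$ and using $U\epsilon U^{-1}=T^{2}\epsilon$ gives $\left.P\right|_{T^{2}}=-P$, which together with $\left.P\right|_{1+S}=\left.P\right|_{(1+U+U^{2})(1-S)}=0$ forces $P=0$. Injectivity of $\iota$ now gives $\dim\mathcal{W}^{+}(A)\le\dim\ker f_{A}$, and by rank--nullity, the first assertion and $\dim A^{+}=\dim B^{+}$,
\[
\dim\ker f_{A}=\dim A^{+}-\operatorname{rk}f_{A}=\dim A^{+}-\operatorname{rk}f_{A}^{*}=\dim A^{+}-\dim B^{+}+\dim\mathcal{W}^{+}(B)=\dim\mathcal{W}^{+}(B).
\]
So $\dim\mathcal{W}^{+}(A)\le\dim\mathcal{W}^{+}(B)$; exchanging $A$ and $B$ gives the reverse inequality, hence $\dim\mathcal{W}^{+}(A)=\dim\ker f_{A}=\dim\mathcal{W}^{+}(B)$, and the injection $\iota$ is a bijection.

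The substantive work is entirely inside $\mathbb{Q}[\mathrm{PGL}(2,\mathbb{Z})]$: establishing the key identity, and---the most delicate point---verifying that $\left.P\right|_{UM}\in A^{-}$ for $P\in\mathcal{W}^{+}(A)$ (well-definedness of $\iota$) and that $\left.P\right|_{T^{2}}=-P$ together with the defining relations of $\mathcal{W}^{+}(A)$ forces $P=0$ (injectivity of $\iota$). All of these are elementary manipulations using $S^{2}=U^{3}=1$ and the conjugation rules $\epsilon S\epsilon=S^{-1}$, $\epsilon U\epsilon=SU^{-1}S$, $\epsilon T\epsilon=T^{-1}$; I expect the well-definedness of $\iota$ to be the main obstacle.
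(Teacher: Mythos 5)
Your proposal is correct and takes essentially the same route as the paper: the adjoint operator $1+U^{2}-SU$, the derivation of $\left.P\right|_{1+S}=0$ and $\left.P\right|_{(1+U+U^{2})(1-S)}=0$ from $\left.P\right|_{(1+U^{2}-SU)(1+\epsilon)}=0$ (your single identity repackages the paper's two-step ``act by $S$ on the right and add'' argument), and the concluding $A\leftrightarrow B$ dimension swap all match the paper's proof. The two computations you leave as sketches --- that $\left.P\right|_{UM}\in A^{-}$ for the well-definedness of $\iota$, and that $\left.P\right|_{T^{2}}=-P$ (which is equivalent to $\iota(P)=0$ for $P\in A^{+}$) together with the relations of $\mathcal{W}^{+}(A)$ forces $P=0$ --- are exactly the manipulations the paper writes out explicitly, and both go through as you predict.
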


\begin{proof}
Note that $P$ is in $\ker f_{A}^{*}$ if and only if $\left.P\right|_{(1+U^{2}-SU)(1+\epsilon)}=0$.
For $P\in\mathcal{W}^{+}(B)$, we have
\[
\left.P\right|_{(1+U^{2}-SU)(1+\epsilon)}=\left.P\right|_{(1+U^{2}+U)(1+\epsilon)}=0,
\]
and thus $\mathcal{W}^{+}(B)\subset\ker f^{*}$. On the other hand,
if $P\in\ker f^{*}$, then
\begin{equation}
0=\left.P\right|_{(1+U^{2}-SU)(1+\epsilon)}=\left.P\right|_{2+U^{2}-SU+SUS-U^{2}S}.\label{eq:eA1}
\end{equation}
By acting $S$ to (\ref{eq:eA1}), we also have
\begin{equation}
0=\left.P\right|_{2S+U^{2}S-SUS+SU-U^{2}}.\label{eq:eA2}
\end{equation}
By adding (\ref{eq:eA1}) and (\ref{eq:eA2}), we get
\begin{equation}
\left.P\right|_{1+S}=0.\label{eq:eA3}
\end{equation}
By (\ref{eq:eA1}) and (\ref{eq:eA2}), we have
\begin{equation}
0=\left.P\right|_{(1+U^{2}+U)(1-S)}.\label{eq:eA4}
\end{equation}
By (\ref{eq:eA1}) and (\ref{eq:eA2}), we have $P\in\mathcal{W}^{+}(B)$,
which completes the proof of
\begin{equation}
\ker f_{A}^{*}=\mathcal{W}^{+}(B).\label{eq:ker_fAstar=00003DWplusB}
\end{equation}
The map $\iota$ is well-defined since
\begin{align*}
\left.P\right|_{U(1+\epsilon)(1+U-U^{2}S)(1+\epsilon)} & =\left.P\right|_{U(1+\epsilon)(U-U^{2}S)(1+\epsilon)}+2\left.P\right|_{U(1+\epsilon)}\\
 & =\left.P\right|_{U(1+\epsilon)(1-\epsilon S)(U-U^{2}S)}+2\left.P\right|_{U(1+\epsilon)}\\
 & =\left.P\right|_{(U-U\epsilon S)(1+\epsilon)(U-U^{2}S)}+2\left.P\right|_{U(1+\epsilon)}\\
 & =-2\left.P\right|_{(U-U^{2}S)}+2\left.P\right|_{U(1+\epsilon)}\\
 & =0
\end{align*}
for $P\in\mathcal{W}^{+}(A)$. If $P\in\mathcal{W}^{+}(A)$ and $\iota(P)=0$,
then
\[
0=\left.\iota(P)\right|_{1-S}=\left.P\right|_{U(1+\epsilon)(1-S)}=\left.P\right|_{(U-U^{2}S)(1-S)}=\left.P\right|_{(1+U+U^{2})(1-S)-(1-S)}=-2P.
\]
Thus $\iota$ is injective and
\begin{equation}
\dim_{\mathbb{Q}}\mathcal{W}^{+}(A)\leq\dim_{\mathbb{Q}}\ker(f_{A}).\label{eq:dim_ineq1}
\end{equation}
Since $\dim_{\mathbb{Q}}A^{+}=\dim_{\mathbb{Q}}(A/A^{-})$, we have
$\dim_{\mathbb{Q}}\ker(f_{A})=\dim_{\mathbb{Q}}\ker f_{A}^{*}$. Thus,
by (\ref{eq:ker_fAstar=00003DWplusB}), we have
\begin{equation}
\dim_{\mathbb{Q}}\ker(f_{A})=\dim_{\mathbb{Q}}\mathcal{W}^{+}(B).\label{eq:dim_eq1}
\end{equation}
By changing $A$ and $B$ in (\ref{eq:dim_ineq1}) and (\ref{eq:dim_eq1}),
we also have
\begin{equation}
\dim_{\mathbb{Q}}\mathcal{W}^{+}(B)\leq\dim_{\mathbb{Q}}\ker(f_{B})\label{eq:dim_ineq2}
\end{equation}
and
\begin{equation}
\dim_{\mathbb{Q}}\ker(f_{B})=\dim_{\mathbb{Q}}\mathcal{W}^{+}(A).\label{eq:dim_eq2}
\end{equation}
By (\ref{eq:dim_ineq1}), (\ref{eq:dim_eq1}), (\ref{eq:dim_ineq2}),
and (\ref{eq:dim_eq2}), we have
\[
\dim_{\mathbb{Q}}\mathcal{W}^{+}(A)=\dim_{\mathbb{Q}}\ker(f_{A})=\dim_{\mathbb{Q}}\mathcal{W}^{+}(B)=\dim_{\mathbb{Q}}\ker(f_{B}).
\]
Thus $\iota$ is bijection and the proposition is proved.
\end{proof}

\subsection{Proof of the main theorem}

Fix integers $N\geq1$ and $w\geq0$. For $C=C_{a,b}\in\Gamma_{1}(N)\backslash{\rm SL}(2,\mathbb{Z})$,
define $\mathbb{Q}$-linear maps $\lambda_{C}$, $\lambda_{C}^{\mathcal{S}}$
and $\lambda_{C}^{\mathcal{P}}$ from $V_{w}$ to $\mathcal{D}_{w+2,N}$
by

\begin{align*}
\lambda_{C}(X^{r}Y^{s}) & \coloneqq r!s!Z_{a,b}^{r+1,s+1},\\
\lambda_{C}^{\mathcal{S}}(X^{r}Y^{s}) & \coloneqq r!s!Z_{a+b}^{r+s+2},\\
\lambda_{C}^{\mathcal{P}}(X^{r}Y^{s}) & \coloneqq r!s!P_{a,b}^{r+1,s+1}.
\end{align*}
Furthermore, for $\bullet\in\{\emptyset,\mathcal{S},\mathcal{P}\}$,
define $\lambda^{\bullet}:\tilde{\bar{V}}_{w}^{\Gamma_{1}(N)}\to\mathcal{D}_{w+2,N}$
by
\[
\lambda^{\bullet}(P)=\sum_{C\in\Gamma_{1}(N)\backslash{\rm SL}(2,\mathbb{Z})}\lambda_{C}^{\bullet}(P(CS)).
\]
Let us rewrite the defining relations of $\mathcal{D}_{w+2,N}$:
\begin{equation}
P_{a,b}^{r+1,s+1}=Z_{a,b}^{r+1,s+1}+Z_{b,a}^{s+1,r+1}+Z_{a+b}^{r+s+2}=\sum_{j=0}^{r}\binom{s+j}{j}Z_{a-b,b}^{r-j+1,s+j+1}+\sum_{j=0}^{s-1}{r-1+j \choose j}Z_{b-a,a}^{s-j+,r+j+1}\label{eq:dsh1}
\end{equation}
by using $\lambda$, $\lambda^{\mathcal{S}}$ and $\lambda^{\mathcal{P}}$.
Put 
\[
A(N,w)=\{(r,s,a,b):r,s\geq0,\,r+s=w,\,(a,b)\in A(N)\}.
\]
For $(r,s,a,b)\in A(N,w)$, define $Q_{a,b}^{r,s}\in\tilde{\bar{V}}_{w}^{\Gamma_{1}(N)}$
by $Q_{a,b}^{r,s}(C)=\delta_{C_{a,b}S,C}\frac{X^{r}Y^{s}}{r!s!}$.
Note that for $m\in\{0,1\}$ and $\gamma\in\epsilon^{m}{\rm SL}(2,\mathbb{Z})$,
we have
\begin{align*}
\lambda^{\bullet}(\left.Q_{a,b}^{r,s}\right|_{\gamma}) & =\sum_{C}\lambda_{C}^{\bullet}(\left.Q_{a,b}^{r,s}\right|_{\gamma}(CS))=\sum_{C}\lambda_{C}^{\bullet}(\left.Q_{a,b}^{r,s}(J^{m}\epsilon^{m}CS\gamma^{-1})\right|_{\gamma})=\sum_{C}\delta_{C_{a,b}S,J^{m}\epsilon^{m}CS\gamma^{-1}}\lambda_{C}^{\bullet}(\left.X^{r}Y^{s}\right|_{\gamma})\\
 & =\lambda_{J^{m}\epsilon^{m}C_{a,b}S\gamma S^{-1}}^{\bullet}(\left.X^{r}Y^{s}\right|_{\gamma})=\lambda_{\epsilon^{m}C_{a,b}(\gamma^{t})^{-1}}(\left.X^{r}Y^{s}\right|_{\gamma})
\end{align*}
where $\gamma^{t}$ is the transposed matrix of $\gamma$. Then the
terms in (\ref{eq:dsh1}) can be described as follows:
\begin{align*}
P_{a,b}^{r+1,s+1} & =\frac{1}{r!s!}\lambda^{\mathcal{P}}(Q_{a,b}^{r,s}),\\
Z_{a,b}^{r+1,s+1} & =\frac{1}{r!s!}\lambda_{C_{a,b}}(X^{r}Y^{s})=\lambda(Q_{a,b}^{r,s}),\\
Z_{b,a}^{s+1,r+1} & =\frac{1}{r!s!}\lambda_{\epsilon C_{a,b}\epsilon S}(\left.X^{r}Y^{s}\right|_{\epsilon S})=\lambda(\left.Q_{a,b}^{r,s}\right|_{\epsilon S}),\\
Z_{a+b}^{r+s+2} & =\frac{1}{r!s!}\lambda^{\mathcal{S}}(Q_{a,b}^{r,s}),\\
\sum_{j=0}^{r}\binom{s+j}{j}Z_{a-b,b}^{r-j+1,s+j+1} & =\frac{1}{r!s!}\lambda_{C_{a-b,b}}(\left.X^{r}Y^{s}\right|_{T})=\lambda(\left.Q_{a,b}^{r,s}\right|_{T}),\\
\sum_{j=0}^{s}{r+j \choose j}Z_{b-a,a}^{s-j+1,r+j+1} & =\frac{1}{r!s!}\lambda_{C_{b-a,a}}(\left.X^{r}Y^{s}\right|_{\epsilon ST})=\lambda(\left.Q_{a,b}^{r,s}\right|_{\epsilon ST}).
\end{align*}
Thus (\ref{eq:dsh1}) is equivalent to
\begin{equation}
\lambda^{\mathcal{P}}(P)=\lambda(\left.P\right|_{(1+\epsilon S)})+\lambda^{\mathcal{S}}(P)=\lambda(\left.P\right|_{(1+\epsilon S)T})\label{eq:dsh2}
\end{equation}
for $P=Q_{a,b}^{r,s}\in\tilde{\bar{V}}_{w}^{\Gamma_{1}(N)}$. Let
$\bar{V}_{w}^{\Gamma,\pm}$ be the $\pm$-eigenspaces of $\bar{V}_{w}^{\Gamma}$
for the action of $\epsilon$.

Let $\delta:\bar{V}_{w}^{\Gamma}\to\bar{V}_{w}^{\Gamma}/\bar{V}_{w}^{\Gamma,-}$
be a map defined by
\[
\delta(P)=(\left.P\right|_{1+SU^{2}S-SU}\bmod\bar{V}_{w}^{\Gamma,-}).
\]
Then the induced dual map $\delta^{*}:V_{w}^{\Gamma,+}\to V_{w}^{\Gamma}$
is expressed as
\[
\delta^{*}(P)=\left.P\right|_{1+(SU^{2}S)^{-1}-(SU)^{-1}}=\left.P\right|_{1+SUS-U^{2}S}.
\]

\begin{prop}
\label{prop:kernel_delta_star}For an element $P$ of $V_{w}^{\Gamma,+}$,
the following conditions are equivalent.
\begin{enumerate}
\item $\delta^{*}(P)\in V_{w}^{\Gamma,-}$,
\item $\delta^{*}(P)=0$,
\item $P\in W_{w}^{\Gamma,+}$.
\end{enumerate}
\end{prop}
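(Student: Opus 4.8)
The plan is to prove the implications $(3)\Rightarrow(2)\Rightarrow(1)\Rightarrow(3)$; since $0\in V_{w}^{\Gamma,-}$, the step $(2)\Rightarrow(1)$ is trivial. For $(3)\Rightarrow(2)$ I would compute $\delta^{*}(P)=\left.P\right|_{1+SUS-U^{2}S}=P+\left.P\right|_{SUS}-\left.P\right|_{U^{2}S}$ directly in the group algebra acting on $V_{w}^{\Gamma}$. If $P\in W_{w}^{\Gamma,+}$ then $\left.P\right|_{S}=-P$ (from $\left.P\right|_{1+S}=0$) and $\left.P\right|_{U^{2}}=-P-\left.P\right|_{U}$ (from $\left.P\right|_{1+U+U^{2}}=0$), hence $\left.P\right|_{SUS}=-\left.P\right|_{US}$ and $\left.P\right|_{U^{2}S}=-\left.P\right|_{S}-\left.P\right|_{US}=P-\left.P\right|_{US}$, so that $\delta^{*}(P)=P-\left.P\right|_{US}-(P-\left.P\right|_{US})=0$. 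This step uses neither $\epsilon$ nor $J$.

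For $(1)\Rightarrow(3)$ the idea is to match $\delta^{*}$ with the map $f_{A}^{*}$ of Proposition~\ref{prop:fA} applied with $A=\bar{V}_{w}^{\Gamma}$, so that $B=A^{\vee}$ is $V_{w}^{\Gamma}$ and $f_{A}^{*}(P)=\left.P\right|_{1+U^{2}-SU}$. For $P\in V_{w}^{\Gamma,+}$ we have $\left.P\right|_{\epsilon}=P$, and using $\epsilon S=S\epsilon$, $\epsilon U\epsilon=U^{t}$ together with the resulting identities $SU^{t}S=U^{2}$, $(U^{t})^{2}=SUS$, $SU^{t}=U^{2}S$ in $\mathrm{PGL}(2,\mathbb{Z})$, one expands both $(1+SUS-U^{2}S)(1+\epsilon)$ and $(1+U^{2}-SU)(1+\epsilon)$ acting on $P$ and finds that both yield
\[
\delta^{*}(P)+\left.\delta^{*}(P)\right|_{\epsilon}=\left.P\right|_{2+U^{2}-SU+SUS-U^{2}S}=f_{A}^{*}(P)+\left.f_{A}^{*}(P)\right|_{\epsilon}.
\]
Consequently condition $(1)$, that is, $\delta^{*}(P)+\left.\delta^{*}(P)\right|_{\epsilon}=0$, is equivalent to $f_{A}^{*}(P)=0$ in $B/B^{-}=V_{w}^{\Gamma}/V_{w}^{\Gamma,-}$, i.e.\ to $P\in\ker f_{A}^{*}$. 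By Proposition~\ref{prop:fA} we have $\ker f_{A}^{*}=\mathcal{W}^{+}(V_{w}^{\Gamma})$, and by Proposition~\ref{prop:W_general} this equals $W_{w}^{\Gamma,+}$; hence $(1)\Rightarrow(3)$ (in fact $(1)\Leftrightarrow(3)$), which closes the chain. Alternatively, one could bypass the reference to $f_{A}^{*}$ and instead rerun the argument of Proposition~\ref{prop:fA} directly from $\left.P\right|_{2+U^{2}-SU+SUS-U^{2}S}=0$: acting by $S$ on the right and adding gives $\left.P\right|_{1+S}=0$, and then recombining gives $\left.P\right|_{(1+U+U^{2})(1-S)}=0$, so $P\in\mathcal{W}^{+}(V_{w}^{\Gamma})=W_{w}^{\Gamma,+}$.

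The only delicate point is the word identity $\left.P\right|_{(1+SUS-U^{2}S)(1+\epsilon)}=\left.P\right|_{2+U^{2}-SU+SUS-U^{2}S}$ (and its twin for $f_{A}^{*}$): one has to push each occurrence of $\epsilon$ to the far left using $\epsilon S=S\epsilon$ and $\epsilon U\epsilon=U^{t}$, delete it via $\left.P\right|_{\epsilon}=P$, and simplify the transposed words. Since $2+U^{2}-SU+SUS-U^{2}S$ is exactly the group-algebra element already produced from $f_{A}^{*}$ in the proof of Proposition~\ref{prop:fA}, no new combinatorial input is needed, and once this identity is in place the statement follows formally from Propositions~\ref{prop:fA} and~\ref{prop:W_general}.
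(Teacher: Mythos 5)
Your proposal is correct and follows essentially the same route as the paper: the key step $(1)\Rightarrow(3)$ is exactly the paper's computation, namely expanding $\left.\delta^{*}(P)\right|_{1+\epsilon}$ to $\left.P\right|_{2+SUS-U^{2}S+U^{2}-SU}=0$, acting by $S$ and adding to get $\left.P\right|_{1+S}=0$, then recombining to get $\left.P\right|_{(1+U+U^{2})(1-S)}=0$ and invoking Proposition \ref{prop:W_general}. Your observation that this symmetrized element coincides with the one characterizing $\ker f_{A}^{*}$ in Proposition \ref{prop:fA} is a valid (and tidy) way to package the same computation, and your explicit verification of $(3)\Rightarrow(2)$ fills in the ``direct calculation'' the paper leaves to the reader.
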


\begin{proof}
The implication (3)$\Rightarrow$(2) follows from the direct calculation
and the implication (2)$\Rightarrow$(1) is trivial. Thus the remaining
is a proof of (1)$\Rightarrow$(3). Assume that $\delta^{*}(P)\in V_{w}^{\Gamma,-}$
and $P\in V_{w}^{\Gamma,+}$. Then
\begin{align}
0 & =\left.\delta^{*}(P)\right|_{1+\epsilon}=\left.P\right|_{(1+SUS-U^{2}S)(1+\epsilon)}=\left.P\right|_{2+SUS-U^{2}S+U^{2}-SU}.\label{eq:e1}
\end{align}
By acting $S$ to (\ref{eq:e1}), we also have
\begin{equation}
0=\left.P\right|_{2S+SU-U^{2}+U^{2}S-SUS}.\label{eq:e2}
\end{equation}
By adding (\ref{eq:e1}) and (\ref{eq:e2}), we obtain
\begin{equation}
\left.P\right|_{1+S}=0.\label{eq:e3}
\end{equation}
By (\ref{eq:e2}) and (\ref{eq:e3}), we have
\begin{equation}
\left.P\right|_{(1+U+U^{2})(S-1)}=0.\label{eq:e4}
\end{equation}
By (\ref{eq:e3}), (\ref{eq:e4}) and Proposition \ref{prop:W_general},
we have $P\in W_{w}^{\Gamma,+}$. Hence (1)$\Rightarrow$(3) is also
proved.
\end{proof}
\begin{lem}
\label{lem:dsh_in_kerdelta}We have
\[
\lambda^{-1}(\mathcal{P}_{k,N}^{{\rm ev}})\subset\ker\delta.
\]
\end{lem}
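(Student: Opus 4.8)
The plan is to reduce the lemma to two identities in the group ring $\mathbb{Q}[\mathrm{PGL}(2,\mathbb{Z})]$ and then combine them with the double shuffle relation (\ref{eq:dsh2}). Working modulo $\pm I$ (so $S^{2}=U^{3}=1$, $T=US$, $\epsilon S=S\epsilon$, $\epsilon U\epsilon=SU^{2}S$), conjugation by $\epsilon$ fixes $S$ and sends $U$ to $SU^{-1}S$, so for $W$ supported on $\mathrm{PSL}(2,\mathbb{Z})$ one has $(1+\epsilon S)\,W\,(1+\epsilon)=\bigl(W+S(\epsilon W\epsilon)\bigr)(1+\epsilon)$. I would apply this with $W=(1-T)(1+SU^{2}S-SU)$ and with $W=T(1+SU^{2}S-SU)$, which by $U^{3}=1$ simplify to $1-S-T+U^{2}+SU^{2}S-SU$ and to $T+S-U^{2}$; the bracket $W+S(\epsilon W\epsilon)$ then collapses to $0$ and to $1+S$ respectively, giving
\[
(1+\epsilon S)(1-T)(1+SU^{2}S-SU)(1+\epsilon)=0,\qquad (1+\epsilon S)\,T\,(1+SU^{2}S-SU)(1+\epsilon)=(1+S)(1+\epsilon).
\]

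Next I would exploit that, by (\ref{eq:dsh2}), $\lambda^{\mathcal{P}}(P)=\lambda(P|_{(1+\epsilon S)T})$ and $\lambda^{\mathcal{S}}(M)=\lambda^{\mathcal{P}}(M)-\lambda(M|_{1+\epsilon S})=\lambda(M|_{(1+\epsilon S)(T-1)})$ for all $P,M$, while $\ker\delta=\{Q:Q|_{(1+SU^{2}S-SU)(1+\epsilon)}=0\}$ since $\bar{V}_{w}^{\Gamma,-}=\{Q:Q|_{1+\epsilon}=0\}$. The first identity above shows $M|_{(1+\epsilon S)(T-1)}\in\ker\delta$ for every $M$, so $\lambda^{\mathcal{S}}(\tilde{\bar{V}}_{w}^{\Gamma})\subseteq\lambda(\ker\delta)$; the second identity, together with $Q|_{(1+S)(1+\epsilon)}=0$ for $Q\in\bar{V}_{w}^{\Gamma,-}$, shows $P|_{(1+\epsilon S)T}\in\ker\delta$ for $P\in\bar{V}_{w}^{\Gamma,-}$, so $\lambda^{\mathcal{P}}(\bar{V}_{w}^{\Gamma,-})\subseteq\lambda(\ker\delta)$. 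Using the elementary relations $Q_{a,b}^{r,s}|_{\epsilon}=(-1)^{r}Q_{-a,b}^{r,s}$ and $Q_{a,b}^{r,s}|_{J}=(-1)^{r+s}Q_{-a,-b}^{r,s}$ (hence $Q_{a,b}^{r,s}|_{J\epsilon}=(-1)^{s}Q_{a,-b}^{r,s}$), the generator $P_{a,b}^{r,s}+(-1)^{r}P_{-a,b}^{r,s}+(-1)^{s}P_{a,-b}^{r,s}+(-1)^{r+s}P_{-a,-b}^{r,s}$ of $\mathcal{P}_{k,N}^{{\rm ev}}$ equals $\lambda^{\mathcal{P}}\bigl(Q_{a,b}^{r-1,s-1}|_{(1+J)(1-\epsilon)}\bigr)$ with $Q_{a,b}^{r-1,s-1}|_{(1+J)(1-\epsilon)}\in\bar{V}_{w}^{\Gamma,-}$, and the generator $Z_{a}^{k}+(-1)^{k}Z_{-a}^{k}$ equals $\lambda^{\mathcal{S}}\bigl(Q_{a_{0},b_{0}}^{r-1,s-1}|_{1+J}\bigr)$ for any $(a_{0},b_{0})\in A(N)$ with $a_{0}+b_{0}=a$; hence $\mathcal{P}_{k,N}^{{\rm ev}}\subseteq\lambda(\ker\delta)$.

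Finally I would identify $\ker\lambda$. Presenting $\mathcal{D}_{k,N}$ as the free space on $\{Z_{a,b}^{r,s}\}\cup\{Z_{c}^{k}\}$ modulo the double shuffle relations and comparing with (\ref{eq:dsh2}) — which gives $\lambda(M|_{(1+\epsilon S)(1-T)})=-\lambda^{\mathcal{S}}(M)$ — one obtains $\ker\lambda=\{M|_{(1+\epsilon S)(1-T)}:\lambda^{\mathcal{S}}(M)=0\}$; in particular, by the first identity above, $\ker\lambda\subseteq\ker\delta$. The lemma then follows formally: if $\lambda(P)\in\mathcal{P}_{k,N}^{{\rm ev}}$, pick $\tilde{P}\in\ker\delta$ with $\lambda(\tilde{P})=\lambda(P)$; then $P-\tilde{P}\in\ker\lambda\subseteq\ker\delta$, so $P\in\ker\delta$.

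I expect the main obstacle to be the first group-ring identity, which is the single computation underlying both $\lambda^{\mathcal{S}}(\tilde{\bar{V}}_{w}^{\Gamma})\subseteq\lambda(\ker\delta)$ and $\ker\lambda\subseteq\ker\delta$; the bookkeeping needed to pin down $\ker\lambda$ from the presentation of $\mathcal{D}_{k,N}$ is the secondary difficulty, while the remaining steps are routine once (\ref{eq:dsh2}) and the $\epsilon$-, $J$-actions on $Q_{a,b}^{r,s}$ are in hand.
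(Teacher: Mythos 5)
Your proof is correct and follows essentially the same route as the paper's: the two group-ring identities $(1+\epsilon S)(1-T)(1+SU^{2}S-SU)(1+\epsilon)=0$ and $(1+\epsilon S)T(1+SU^{2}S-SU)(1+\epsilon)=(1+S)(1+\epsilon)$ are precisely the computations carried out in the paper's proof (the latter appearing there in the trivially equivalent form without the factor $T$, since the two differ by the first identity). The only difference is that you spell out in detail --- via the explicit preimages $\lambda^{\mathcal{P}}\bigl(\left.Q_{a,b}^{r-1,s-1}\right|_{(1+J)(1-\epsilon)}\bigr)$, $\lambda^{\mathcal{S}}\bigl(\left.Q_{a_{0},b_{0}}^{r-1,s-1}\right|_{1+J}\bigr)$ of the generators of $\mathcal{P}_{k,N}^{{\rm ev}}$ and the determination of $\ker\lambda$ --- what the paper compresses into the single unproved assertion that $\lambda^{-1}(\mathcal{P}_{k,N}^{{\rm ev}})$ is spanned by $\{\left.P\right|_{1+\epsilon S}:P\in\bar{V}_{w}^{\Gamma_{1}(N),-}\}\cup\{\left.P\right|_{(1+\epsilon S)(1-T)}:P\in\bar{V}_{w}^{\Gamma_{1}(N)}\}$.
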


\begin{proof}
By definition, $\lambda^{-1}(\mathcal{P}_{k,N}^{{\rm ev}})$ is spanned
by
\[
\{\left.P\right|_{1+\epsilon S}:P\in\bar{V}_{w}^{\Gamma_{1}(N),-}\}\cup\{\left.P\right|_{(1+\epsilon S)(1-T)}:P\in\bar{V}_{w}^{\Gamma_{1}(N)}\}.
\]
Note that $Q\in\ker\delta$ if and only if $\left.Q\right|_{(1+SU^{2}S-SU)(1+\epsilon)}=0$.
For $P\in\bar{V}_{w}^{\Gamma_{1}(N)}$, we have
\[
\left.P\right|_{(1+\epsilon S)(1+SU^{2}S-SU)(1+\epsilon)}=\left.P\right|_{(1+\epsilon S)(1+SU^{2}S-SU)(1+\epsilon)}=\left.P\right|_{(1+\epsilon)(1+S)},
\]
where the last expression vanishes if $P\in\bar{V}_{w}^{\Gamma_{1}(N),-}$.
Thus 
\[
\{\left.P\right|_{1+\epsilon S}:P\in\bar{V}_{w}^{\Gamma_{1}(N),-}\}\subset\ker\delta.
\]
Furthermore, for $P\in\bar{V}_{w}^{\Gamma_{1}(N)}$, we have
\[
\left.P\right|_{(1+\epsilon S)(1-T)(1+SU^{2}S-SU)(1+\epsilon)}=\left.P\right|_{(1+\epsilon S)(1-\epsilon S)(1+SU^{2}S-SU-US-S+U^{2})}=0.
\]
Thus, we also have
\[
\{\left.P\right|_{(1+\epsilon S)(1-T)}:P\in\bar{V}_{w}^{\Gamma_{1}(N)}\}\subset\ker\delta.
\]
Hence the lemma is proved.
\end{proof}
\begin{prop}
\label{prop:P_R_injectivity}Let $P\in W_{w}^{\Gamma,+}$. If $\left.P\right|_{U(1+\epsilon)}=0$
then $P=0$.
\end{prop}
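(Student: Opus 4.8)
The plan is to recognize this as the injectivity half of Proposition \ref{prop:fA}. First I would observe that $V_{w}^{\Gamma}$ is a finite-dimensional $\mathbb{Q}$-vector space carrying a right $\mathrm{PGL}(2,\mathbb{Z})$-action: since $J$ is central in $\mathrm{GL}(2,\mathbb{Z})$, the subspace $V_{w}^{\Gamma}=\{P:\left.P\right|_{J}=P\}$ is a $\mathrm{GL}(2,\mathbb{Z})$-submodule of $\tilde{V}_{w}^{\Gamma}$, and $J$ acts trivially on it by definition, so the action descends to $\mathrm{PGL}(2,\mathbb{Z})$. Hence Proposition \ref{prop:fA} applies with $A=V_{w}^{\Gamma}$. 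By Proposition \ref{prop:W_general}, $W_{w}^{\Gamma,+}=\mathcal{W}^{+}(V_{w}^{\Gamma})=\mathcal{W}^{+}(A)$, so the bijection $\iota\colon\mathcal{W}^{+}(A)\to\ker f_{A}$, $\iota(Q)=\left.Q\right|_{U(1+\epsilon)}$, is in particular injective on $W_{w}^{\Gamma,+}$. The hypothesis is precisely $\iota(P)=\left.P\right|_{U(1+\epsilon)}=0$, and therefore $P=0$.

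For a self-contained derivation I would reproduce the computation underlying Proposition \ref{prop:fA}. Using $\left.P\right|_{\epsilon}=P$, $\left.P\right|_{1+S}=0$, and the relations $\epsilon U\epsilon=SU^{-1}S$, $U^{3}=1$, $S^{2}=1$ in $\mathrm{PGL}(2,\mathbb{Z})$, one rewrites
\[
\left.P\right|_{U(1+\epsilon)}=\left.P\right|_{U}+\left.P\right|_{\epsilon U\epsilon}=\left.P\right|_{U}-\left.P\right|_{U^{2}S}=\left.P\right|_{U-U^{2}S}.
\]
Acting on the right by $1-S$ and using in addition $\left.P\right|_{(1+U+U^{2})(1-S)}=0$ (valid since $P\in W_{w}^{\Gamma,+}=\mathcal{W}^{+}(V_{w}^{\Gamma})$ by Proposition \ref{prop:W_general}) one gets
\[
\left.P\right|_{U(1+\epsilon)(1-S)}=\left.P\right|_{(U-U^{2}S)(1-S)}=\left.P\right|_{(1+U+U^{2})(1-S)}-\left.P\right|_{1-S}=-\left.P\right|_{1-S}=-2P,
\]
and the left-hand side vanishes by hypothesis, so $P=0$.

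Every step is formal; the only thing needing care — and it is already dealt with inside the proof of Proposition \ref{prop:fA} — is the dictionary between the defining relations of $W_{w}^{\Gamma,+}$ and those of $\mathcal{W}^{+}(V_{w}^{\Gamma})$ (supplied by Proposition \ref{prop:W_general}) together with the bookkeeping of the $\mathrm{PGL}(2,\mathbb{Z})$-relations $\epsilon U\epsilon=SU^{-1}S$ and $\epsilon S\epsilon=S^{-1}$ used to move $\epsilon$ past $U$ and $S$. Consequently I do not expect any genuine obstacle, and the shortest route is simply to invoke the injectivity of $\iota$ with $A=V_{w}^{\Gamma}$.
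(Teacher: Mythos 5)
Your proposal is correct and is exactly the paper's route: the paper's proof of this proposition is the single line "it is a special case of the injectivity of $\iota$ in Proposition \ref{prop:fA}," and your unwound computation ($\left.P\right|_{U(1+\epsilon)}=\left.P\right|_{U-U^{2}S}$, then $\left.P\right|_{(U-U^{2}S)(1-S)}=\left.P\right|_{(1+U+U^{2})(1-S)-(1-S)}=-2P$) reproduces verbatim the injectivity step inside the proof of Proposition \ref{prop:fA}. No issues.
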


\begin{proof}
It is just a special case of injectivity of $\iota$ in Proposition
\ref{prop:fA}.
\end{proof}
Note that 
\[
\mathcal{P}_{k,N}^{{\rm ev}}=\lambda(V_{w}^{\Gamma_{1}(N),-,{\rm sym}})+\lambda^{\mathcal{S}}(V_{w}^{\Gamma_{1}(N)})
\]
where
\[
V_{w}^{\Gamma_{1}(N),-,{\rm sym}}\coloneqq\{u\in V_{w}^{\Gamma_{1}(N),-}:\left.u\right|_{\epsilon S}=u\}.
\]
The following is a refined version of Theorem \ref{thm:main_intro}.
\begin{thm}
\label{thm:main}Let $P\in\bar{W}_{w}^{\Gamma,+}$. Put $Q\coloneqq\left.P\right|_{U}$
and $Q^{\pm}=\frac{1}{2}\left.Q\right|_{1\pm\epsilon}\in V_{w}^{\Gamma,\pm}$.
Then 
\[
Q^{-}\in V_{w}^{\Gamma_{1}(N),-,{\rm sym}}
\]
and
\[
3\lambda(Q^{+})=-\lambda(Q^{-})-\lambda^{\mathcal{S}}(Q)\in\mathcal{P}_{k,N}^{{\rm ev}}.
\]
Conversely, $\lambda(R)$ with \textbf{$R\in\bar{V}_{w}^{\Gamma,+}$}
belongs to $\mathcal{P}_{k,N}^{{\rm ev}}$ if and only if $R=Q^{+}$
arising in this way.
\end{thm}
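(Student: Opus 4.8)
The argument has a direct part and a converse. For the direct part I would first prove the displayed equation $3\lambda(Q^{+})=-\lambda(Q^{-})-\lambda^{\mathcal{S}}(Q)$ as an identity in $\mathcal{D}_{w+2,N}$ itself, and separately $Q^{-}\in V_{w}^{\Gamma_{1}(N),-,{\rm sym}}$; then the assertion that the common value lies in $\mathcal{P}_{k,N}^{{\rm ev}}$ is formal, since $\mathcal{P}_{k,N}^{{\rm ev}}=\lambda(V_{w}^{\Gamma_{1}(N),-,{\rm sym}})+\lambda^{\mathcal{S}}(V_{w}^{\Gamma_{1}(N)})$ and $Q=P|_{U}\in V_{w}^{\Gamma_{1}(N)}$. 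The converse I would get from a dimension count. All computations take place in the group algebra $\mathbb{Q}[\mathrm{PGL}(2,\mathbb{Z})]$ acting on $\bar{V}_{w}^{\Gamma}$ ($\Gamma=\Gamma_{1}(N)$), using the relations $S^{2}=U^{3}=1$, $T=US$, $\epsilon S\epsilon=S^{-1}$, $\epsilon T\epsilon=T^{-1}$, $\epsilon U\epsilon=T^{-1}S$.

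The engine is the reformulation of the regularized double shuffle relations established above: $\lambda^{\mathcal{P}}(R)=\lambda(R|_{1+\epsilon S})+\lambda^{\mathcal{S}}(R)=\lambda(R|_{(1+\epsilon S)T})$, which extends from the generators $Q_{a,b}^{r,s}$ to all $R\in\tilde{\bar{V}}_{w}^{\Gamma}$ by linearity; equating the two right-hand members gives $\lambda^{\mathcal{S}}(R)=\lambda(R|_{(1+\epsilon S)(T-1)})$. Applying this with $R=Q=P|_{U}$ and using $3Q^{+}+Q^{-}=Q|_{2+\epsilon}$, the equation to be proved becomes $\lambda(P|_{Ug})=0$ with $g=1+\epsilon+T+\epsilon ST-\epsilon S$. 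The $\mathrm{PGL}$-relations collapse $Ug$ to $U+U\epsilon+U^{2}S+\epsilon-T\epsilon$, and then, reducing each monomial using $P|_{\epsilon}=P$, $P|_{S}=-P$ and $P|_{U^{2}}=-P-P|_{U}$ — e.g. $P|_{T^{-1}}=P|_{SU^{2}}=P+P|_{U}$, $P|_{T\epsilon}=P|_{\epsilon T^{-1}}=P|_{T^{-1}}$, $P|_{U\epsilon}=P|_{\epsilon T^{-1}S}=P|_{T^{-1}S}$ — the five contributions cancel, so $P|_{Ug}=0$. An analogous but shorter computation gives the symmetry: writing $(U-U\epsilon)(1-\epsilon S)=U(1+S)-\epsilon T^{-1}(1+S)$ and using $P|_{\epsilon T^{-1}}=P|_{T^{-1}}=P+P|_{U}$, one gets $Q^{-}|_{1-\epsilon S}=\tfrac12(P|_{U}-P|_{T^{-1}})|_{1+S}=\tfrac12(-P)|_{1+S}=0$, so $Q^{-}|_{\epsilon S}=Q^{-}$, which with $Q^{-}\in\bar{V}_{w}^{\Gamma,-}$ yields $Q^{-}\in V_{w}^{\Gamma_{1}(N),-,{\rm sym}}$.

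For the converse, note $Q^{+}=\tfrac12 P|_{U(1+\epsilon)}=\tfrac12\iota(P)$, where $\iota$ is as in Proposition \ref{prop:fA} for $A=\bar{V}_{w}^{\Gamma}$. Since $\iota$ is a bijection $\mathcal{W}^{+}(\bar{V}_{w}^{\Gamma})\simeq\ker f_{\bar{V}_{w}^{\Gamma}}$ and $\mathcal{W}^{+}(\bar{V}_{w}^{\Gamma})=\bar{W}_{w}^{\Gamma,+}$ by Proposition \ref{prop:W_general}, the map $P\mapsto Q^{+}$ is injective with image the subspace $\ker f_{\bar{V}_{w}^{\Gamma}}$, of dimension $\dim\mathcal{W}^{+}(\bar{V}_{w}^{\Gamma})=\dim\mathcal{W}^{+}(V_{w}^{\Gamma})=\dim W_{w}^{\Gamma,+}$ (the middle equality is the last clause of Proposition \ref{prop:fA}, with $B=A^{\vee}=\bar{V}_{w}^{\Gamma}$). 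The direct part shows $\ker f_{\bar{V}_{w}^{\Gamma}}\subseteq\{R\in\bar{V}_{w}^{\Gamma,+}:\lambda(R)\in\mathcal{P}_{k,N}^{{\rm ev}}\}$, and by Lemma \ref{lem:dsh_in_kerdelta} this last set is contained in $\ker\delta\cap\bar{V}_{w}^{\Gamma,+}$. Finally $\delta$ restricts to a linear map $\bar{V}_{w}^{\Gamma,+}\to\bar{V}_{w}^{\Gamma}/\bar{V}_{w}^{\Gamma,-}$ of spaces of equal dimension, so $\dim(\ker\delta\cap\bar{V}_{w}^{\Gamma,+})$ equals the dimension of the kernel of the transpose map; under the pairing $\langle\langle\cdot,\cdot\rangle\rangle$ this transpose is $\delta^{*}$ followed by the projection $V_{w}^{\Gamma}\to V_{w}^{\Gamma}/V_{w}^{\Gamma,-}$, whose kernel is $\{P\in V_{w}^{\Gamma,+}:\delta^{*}(P)\in V_{w}^{\Gamma,-}\}=W_{w}^{\Gamma,+}$ by Proposition \ref{prop:kernel_delta_star}. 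Hence the three nested subspaces all have dimension $\dim W_{w}^{\Gamma,+}$ and therefore coincide, which is exactly the converse.

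The step I expect to cost the most care is the group-algebra identity $P|_{Ug}=0$ (and its shorter sibling for $Q^{-}$): one must keep the two actions of $\epsilon$ straight — the one defining $V_{w}^{\Gamma}$ and the twisted one defining its dual $\bar{V}_{w}^{\Gamma}$, differing by the scalar $(-1)^{w}$ on the common underlying maps — and reduce each monomial against the period relations in a consistent order. With that in place the cancellations are forced, and the remaining ingredients (the description of $\mathcal{P}_{k,N}^{{\rm ev}}$, the dimension identities of Propositions \ref{prop:fA} and \ref{prop:kernel_delta_star}, the injectivity of Proposition \ref{prop:P_R_injectivity}) combine formally.
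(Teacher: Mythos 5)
Your proposal is correct and follows essentially the same route as the paper: the direct part is the paper's computation $\left.P\right|_{U(1+\epsilon S)(1-T)}=\left.P\right|_{U(2+\epsilon)}$ (you merely move everything to one side as $\left.P\right|_{Ug}=0$ and cancel monomial by monomial, and your reductions check out), the symmetry of $Q^{-}$ is obtained from the same identity $\left.Q\right|_{\epsilon S-1}=P$, and the converse is the identical dimension sandwich through $\ker\delta$, Propositions \ref{prop:kernel_delta_star} and \ref{prop:fA}, and the injectivity of $\iota$. If anything, your converse is stated slightly more carefully than the paper's (restricting $\delta$ to $\bar{V}_{w}^{\Gamma,+}$ and intersecting with $\bar{V}_{w}^{\Gamma,+}$ before counting dimensions), but the argument is the same.
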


\begin{proof}
Note that
\[
\left.Q\right|_{\epsilon S-1}=\left.P\right|_{U(\epsilon S-1)}=P,
\]
and thus
\[
\left.Q^{-}\right|_{\epsilon S-1}=\frac{1}{2}\left.Q\right|_{(\epsilon S-1)(1-\epsilon)}=\frac{1}{2}\left.P\right|_{1-\epsilon}=0,
\]
which implies

\[
Q^{-}\in V_{w}^{\Gamma_{1}(N),-,{\rm sym}}.
\]
By (\ref{eq:dsh2}),
\begin{align*}
-\lambda^{\mathcal{S}}(Q) & =\lambda(\left.Q\right|_{(1+\epsilon S)(1-T)})\\
 & =\lambda(\left.P\right|_{U(1+\epsilon S)(1-T)})\\
 & =\lambda(\left.P\right|_{U(2+\epsilon)})\\
 & =\frac{1}{2}\lambda(\left.P\right|_{U(2+\epsilon)(1+\epsilon)})+\frac{1}{2}\lambda(\left.P\right|_{U(2+\epsilon)(1-\epsilon)}).
\end{align*}
Here
\[
\frac{1}{2}\lambda(\left.P\right|_{U(2+\epsilon)(1+\epsilon)})=\frac{3}{2}\lambda(\left.P\right|_{U(1+\epsilon)})=3\lambda(Q^{+})
\]
and
\begin{align*}
\frac{1}{2}\lambda(\left.P\right|_{U(2+\epsilon)(1-\epsilon)}) & =\frac{1}{2}\lambda(\left.P\right|_{U(1-\epsilon)})=\lambda(Q^{-}).
\end{align*}
Thus
\[
3\lambda(Q^{+})=-\lambda(Q^{-})-\lambda^{\mathcal{S}}(Q),
\]
which completes the proof Theorem \ref{thm:main} except for the converse
part.

By Lemma \ref{lem:dsh_in_kerdelta},
\begin{equation}
\dim_{\mathbb{Q}}\left(\bar{V}_{w}^{\Gamma,+}\cap\lambda^{-1}(\mathcal{P}_{k,N}^{{\rm ev}})\right)\leq\dim_{\mathbb{Q}}(\bar{V}_{w}^{\Gamma,+}\cap\ker\delta).\label{eq:ineq1}
\end{equation}
By Proposition \ref{prop:kernel_delta_star}, 
\begin{equation}
\dim_{\mathbb{Q}}\left(\bar{V}_{w}^{\Gamma,+}\cap\ker(\delta)\right)=\dim_{\mathbb{Q}}W_{w}^{\Gamma,+}.\label{eq:eq2}
\end{equation}
By Proposition \ref{prop:fA}, we have
\begin{equation}
\dim_{\mathbb{Q}}W_{w}^{\Gamma,+}=\dim_{\mathbb{Q}}\bar{W}_{w}^{\Gamma,+}.\label{eq:eq3}
\end{equation}
By (\ref{eq:ineq1}), (\ref{eq:eq2}), and (\ref{eq:eq3}), we have
\begin{equation}
\dim_{\mathbb{Q}}\left(\bar{V}_{w}^{\Gamma,+}\cap\lambda^{-1}(\mathcal{P}_{k,N}^{{\rm ev}})\right)\leq\dim_{\mathbb{Q}}\bar{W}_{w}^{\Gamma,+}.\label{eq:tar_ineq_1}
\end{equation}
Therefore, the converse part follows from (\ref{eq:tar_ineq_1}) since
the $\mathbb{Q}$-linear map $P\mapsto Q^{+}$ embeds $\bar{W}_{w}^{\Gamma,+}$
into $\bar{V}_{w}^{\Gamma,+}\cap\lambda^{-1}(\mathcal{P}_{k,N}^{{\rm ev}})$
by Proposition \ref{prop:P_R_injectivity}.
\end{proof}
If we explicitly write down Theorem \ref{thm:main} by the coefficients
of period polynomials, we can show Theorem \ref{thm:main_intro} as
follows.
\begin{proof}[Proof of Theorem \ref{thm:main_intro}]
By definition,
\[
P(C_{a,-a+b})(X+Y,-X)=\left.P\right|_{U}(C_{a,b}S),
\]
and thus
\[
\left.P\right|_{U}(C_{a,b}S)=\sum_{r,s}\frac{1}{r!s!}q_{a,b}^{r+1,s+1}X^{r}Y^{s}.
\]
Thus
\[
\left.P\right|_{U}=\sum_{(r,s,a,b)\in A(w,N)}q_{a,b}^{r+1,s+1}Q_{a,b}^{r,s}.
\]
Let $Q=\left.P\right|_{U}$ and $Q^{\pm}=\frac{1}{2}\left.Q\right|_{1\pm\epsilon}$
as in Theorem \ref{thm:main}. Then
\begin{align*}
Q & =\sum_{(r,s,a,b)\in A(w,N)}q_{a,b}^{r+1,s+1}Q_{a,b}^{r,s},\\
Q^{+} & =\sum_{(r,s,a,b)\in A(w,N)}q_{a,b}^{r+1,s+1,{\rm od}}Q_{a,b}^{r,s},\\
Q^{-} & =\sum_{(r,s,a,b)\in A(w,N)}q_{a,b}^{r+1,s+1,{\rm ev}}Q_{a,b}^{r,s},
\end{align*}
and thus the conditions
\[
q_{b,a}^{s,r,{\rm od}}=(-1)^{r+1}q_{-a,b}^{r,s,{\rm od}}=(-1)^{s+1}q_{a,-b}^{r,s,{\rm od}},\quad q_{a,b}^{r,s,{\rm ev}}=(-1)^{r}q_{-a,b}^{r,s,{\rm ev}}=(-1)^{s}q_{a,-b}^{r,s,{\rm ev}}=q_{b,a}^{s,r,{\rm ev}}
\]
follows from $Q^{\pm}\in\tilde{\bar{V}}_{w}^{\Gamma,\pm}$ and $Q^{-}\in V_{w}^{\Gamma_{1}(N),-,{\rm sym}}$.
Furthermore, by
\[
3\lambda(Q^{+})=-\lambda(Q^{-})-\lambda^{\mathcal{S}}(Q),
\]
we have
\[
3\sum_{r,s,a,b}q_{a,b}^{r,s,{\rm od}}Z_{a,b}^{r,s}=-\sum_{r,s,a,b}q_{a,b}^{r,s,{\rm ev}}Z_{a,b}^{r,s}-\sum_{r,s,a,b}q_{a,b}^{r,s}Z_{a+b}^{r+s}
\]
since $\lambda(Q_{a,b}^{r,s})=Z_{a.b}^{r+1,s+1}$ and $\lambda^{\mathcal{S}}(Q_{a,b}^{r,s})=Z_{a,b}^{r+s+2}$.
Finally, the converse part follows from the converse part of Theorem
\ref{thm:main} since
\[
\sum_{r,s,a,b}c_{a,b}^{r,s}Z_{a,b}^{r,s}=\lambda(\sum_{r,s,a,b}c_{a,b}^{r,s}Q_{a,b}^{r-1,s-1})
\]
and
\[
\sum_{r,s,a,b}c_{a,b}^{r,s}Q_{a,b}^{r-1,s-1}\in\bar{V}_{w}^{\Gamma,+}.\qedhere
\]
\end{proof}

\section{Examples}

\subsection{The case where the weight is $2$, the level is an odd prime $p$,
and $\Gamma=\Gamma_{0}(p)$ }

Let $p$ be an odd prime. By definition, $\bar{W}_{0}^{\Gamma_{0}(p),+}$
is a set of maps $P$ from $\Gamma_{0}(p)\backslash{\rm SL}(2,\mathbb{Z})$
to $V_{0}=\mathbb{Q}$ satisfying
\[
\sum_{\gamma\in\{1,S\}}P(C\gamma^{-1})=\sum_{\gamma\in\{1,U,U^{2}\}}P(C\gamma^{-1})=0
\]
and
\[
P(C)=P(\epsilon C\epsilon)
\]
for $C\in\Gamma_{0}(p)\backslash{\rm SL}(2,\mathbb{Z})$. Furthermore,
$\bar{C}_{0}^{\Gamma_{0}(p),+}=\mathbb{Q}P^{\mathrm{Eis}}$ where
\[
P^{\mathrm{Eis}}(C_{\alpha}):=\begin{cases}
1 & \alpha=(1:0)\\
-1 & \alpha=(0:1)\\
0 & \alpha\neq(1:0),(0:1).
\end{cases}
\]
Note that
\[
\bar{W}_{0}^{\Gamma_{0}(p),+}=\mathbb{Q}P^{\mathrm{Eis}}\oplus\left\{ P\in\bar{W}_{0}^{\Gamma_{0}(p),+}\,:\,P(C_{(1:0)})=P(C_{(0:1)})=0\right\} .
\]

\begin{example}
Let us consider Theorem \ref{thm:main_intro_gamma0} and Corollary
\ref{cor:intro_main_gamma0} for the case where $P=P^{\mathrm{Eis}}$.
Then
\[
q_{(a:b)}^{1,1}=P^{\mathrm{Eis}}(C_{(a:-a+b)})=\begin{cases}
1 & (a:b)=(1:1)\\
-1 & (a:b)=(0:1)\\
0 & (a:b)\neq(1:1),(0:1).
\end{cases}
\]
Thus, Theorem \ref{thm:main_intro_gamma0} says that
\begin{equation}
3\left(\frac{1}{2}(Z_{(1:1)}^{1,1}+Z_{(-1:1)}^{1,1})-Z_{(0:1)}^{1,1}\right)=\frac{1}{2}\left(Z_{(1:1)}^{1,1}-Z_{(-1:1)}^{1,1}\right)+Z_{(0:1)}^{2}-Z_{(-1:1)}^{2}.\label{eq:exa_gamma0_Z_1}
\end{equation}
By applying $\Phi_{2,p}$ to \ref{eq:exa_gamma0_Z_1}, we get
\[
L_{p}^{+}(1)-L_{p}^{+}(0)=\frac{2}{3}\left(\frac{1}{2}L_{p}^{-}(1)+\sum_{c=1}^{p-1}\zeta\binom{2}{\zeta_{N}^{c}}-(p-1)\zeta(2)\right).
\]
and Corollary \ref{cor:intro_main_gamma0} says that
\[
L_{p}^{+}(1)-L_{p}^{+}(0)\in\mathbb{Q}\pi^{2}
\]
(see Section \ref{subsec:intro_X0_11} for the definition of $L_{p}^{+}(a)$).
\end{example}

\begin{example}
Let us consider Theorem \ref{thm:main_intro_gamma0} and Corollary
\ref{cor:intro_main_gamma0} for the case where $P\in\bar{W}_{0}^{\Gamma_{0}(p),+}$
and $P(C_{(1:0)})=P(C_{(0:1)})=0$. Since $P\in\bar{W}_{0}^{\Gamma_{0}(p),+}$,
$P(C_{(1:1)})=P(C_{(-1:1)})=0$. Thus, $q_{(1:0)}^{1,1}=P(C_{(-1:1)})=0$
and
\[
q_{(a:1)}^{1,1}=P(C_{(a:1-a)}).
\]
Thus, Theorem \ref{thm:main_intro_gamma0} says that
\[
3\sum_{a=1}^{p-1}(P(C_{(a:1-a)})+P(C_{(a:1+a)}))Z_{(a:1)}^{1,1}=\sum_{a=1}^{p-1}(P(C_{(a:1-a)})-P(C_{(a:1+a)}))Z_{(a:1)}^{1,1}-\sum_{a=1}^{p-1}P(C_{(a:1-a)})Z_{(a:1)}^{2},
\]
and Corollary \ref{cor:intro_main_gamma0} says that
\begin{equation}
\sum_{a=1}^{(p-1)/2}\left(P(C_{(a:1-a)})+P(C_{(a:1+a)})\right)L_{p}^{+}(a)\in\mathbb{Q}\pi^{2}.\label{eq:Lp_in_Qpi2}
\end{equation}
Conjecturally, for each odd prime $p$, (\ref{eq:Lp_in_Qpi2}) gives
all $\mathbb{Q}$-linear relations among $\pi^{2}$ and $L_{p}^{+}(a)$
for $a=1,\dots,(p-1)/2$.
\end{example}

\subsection{The case where the level is $2$}

Let $\Gamma=\Gamma_{1}(2)=\Gamma_{0}(2)$. Put $k=w+2\geq0$. Then,
for $k\geq2,$
\[
\dim_{\mathbb{Q}}M_{k}(\Gamma)=\dim_{\mathbb{Q}}\bar{W}_{w}^{\Gamma_{1}(N),+}=\begin{cases}
1+\left[\frac{k}{4}\right] & k:\mathrm{even}\\
0 & k:\mathrm{odd}
\end{cases}
\]
and
\[
\dim_{\mathbb{Q}}S_{k}(\Gamma)=\dim_{\mathbb{Q}}\left(\bar{W}_{w}^{\Gamma_{1}(N),+}/\bar{C}_{w}^{\Gamma,\pm}\right)=\begin{cases}
\left[\frac{k}{4}\right]-1+\delta_{k,2} & k:\mathrm{even}\\
0 & k:\mathrm{odd}
\end{cases}
\]
By definition, for even $w$, $\bar{W}_{w}^{\Gamma_{1}(N),+}$ can
be regarded as the set of tuple $(p_{0,1},p_{1,0},p_{1,1})\in V_{w}^{3}$
satisfying
\[
p_{0,1}(X,Y)+p_{1,0}(-Y,X)=0,
\]
\[
p_{1,1}(X,Y)+p_{1,1}(Y,-X)=0,
\]
and
\[
p_{0,1}(X,Y)+p_{1,1}(X-Y,X)+p_{1,0}(-Y,X-Y)=0.
\]
Furthermore, $\bar{C}_{w}^{\Gamma,\pm}$ is generated by
\[
(Y^{w},-X^{w},0)
\]
and
\[
(X^{w},-Y^{w},X^{w}-Y^{w}).
\]

\begin{example}
Let us consider Theorem \ref{thm:main_intro} for the case $P=(Y^{w},-X^{w},0)$.
Then, for $(a,b)=(0,1)$,
\[
P(C_{a,-a+b})(X-Y,X)=X^{w},
\]
and thus
\[
q_{0,1}^{r+1,s+1}=\delta_{r,w}w!.
\]
For $(a,b)=(1,0),$
\[
P(C_{a,-a+b})(X-Y,X)=0,
\]
and thus
\[
q_{1,0}^{r+1,s+1}=0.
\]
For $(a,b)=(1,1),$
\[
P(C_{a,-a+b})(X-Y,X)=-(X-Y)^{w},
\]
and thus
\[
q_{1,1}^{r+1,s+1}=(-1)^{r+1}w!.
\]
Thus
\[
q_{0,1}^{r+1,s+1,\mathrm{od}}=\delta_{r,w}w!,\qquad q_{1,0}^{r+1,s+1,\mathrm{od}}=0,\qquad q_{1,1}^{r+1,s+1,\mathrm{od}}=\begin{cases}
-w! & r:\mathrm{even}\\
0 & r:\mathrm{odd},
\end{cases}
\]
\[
q_{0,1}^{r+1,s+1,\mathrm{ev}}=0,\qquad q_{1,0}^{r+1,s+1,\mathrm{ev}}=0,\qquad q_{1,1}^{r+1,s+1,\mathrm{ev}}=\begin{cases}
0 & r:\mathrm{even}\\
w! & r:\mathrm{odd},
\end{cases}
\]
and Theorem \ref{thm:main_intro} says that
\[
Z_{0,1}^{k-1,1}-\sum_{\substack{m+n=k\\
m,n:\mathrm{odd}
}
}Z_{1,1}^{m,n}=-\frac{1}{3}\sum_{\substack{m+n=k\\
m,n:\mathrm{even}
}
}Z_{1,1}^{m,n}-\frac{1}{3}Z_{1}^{k}+\frac{1}{3}Z_{0}^{k}.
\]
Corollary \ref{cor:intro_main} says that
\[
\zeta\binom{k-1,1}{1,-1}-\sum_{\substack{m+n=k\\
m,n:\mathrm{odd}
}
}\zeta\binom{m,n}{-1,-1}\in\mathbb{Q}\cdot(2\pi i)^{k}.
\]
\end{example}

\begin{example}
Let us consider Theorem \ref{thm:main_intro} for the case $P=(X^{w},-Y^{w},X^{w}-Y^{w})$.
Then, for $(a,b)=(0,1)$,
\[
P(C_{a,-a+b})(X-Y,X)=(X-Y)^{w},
\]
and thus
\[
q_{0,1}^{r+1,s+1}=(-1)^{r}w!.
\]
For $(a,b)=(1,0),$
\[
P(C_{a,-a+b})(X-Y,X)=(X-Y)^{w}-X^{w},
\]
and thus
\[
q_{1,0}^{r+1,s+1}=(-1)^{r}(1-\delta_{r,w})w!.
\]
For $(a,b)=(1,1),$
\[
P(C_{a,-a+b})(X-Y,X)=-X^{w},
\]
and thus
\[
q_{1,1}^{r+1,s+1}=-\delta_{r,w}w!.
\]
Thus
\[
q_{0,1}^{r+1,s+1,\mathrm{od}}=\begin{cases}
w! & r:\mathrm{even}\\
0 & r:\mathrm{odd}
\end{cases},\qquad q_{1,0}^{r+1,s+1,\mathrm{od}}=\begin{cases}
(1-\delta_{r,w})w! & r:\mathrm{even}\\
0 & r:\mathrm{odd}
\end{cases},\qquad q_{1,1}^{r+1,s+1,\mathrm{od}}=-\delta_{r,w}w!
\]
\[
q_{0,1}^{r+1,s+1,\mathrm{ev}}=\begin{cases}
0 & r:\mathrm{even}\\
-w! & r:\mathrm{odd}
\end{cases},\qquad q_{1,0}^{r+1,s+1,\mathrm{ev}}=\begin{cases}
0 & r:\mathrm{even}\\
-w! & r:\mathrm{odd}
\end{cases},\qquad q_{1,1}^{r+1,s+1,\mathrm{ev}}=0
\]
and Theorem \ref{thm:main_intro} and Corollary \ref{cor:intro_main}
say that
\[
\sum_{\substack{m+n=k\\
m,n:\mathrm{odd}
}
}Z_{0,1}^{m,n}+\sum_{\substack{m+n=k\\
m,n:\mathrm{odd}\\
1\leq m\leq k-3
}
}Z_{1,0}^{m,n}-Z_{1,1}^{k-1,1}=\frac{1}{3}\sum_{\substack{m+n=k\\
m,n:\mathrm{even}
}
}Z_{0,1}^{m,n}+\frac{1}{3}\sum_{\substack{m+n=k\\
m,n:\mathrm{even}
}
}Z_{1,0}^{m,n}-\frac{1}{3}Z_{1}^{k}+\frac{1}{3}Z_{1}^{k},
\]
and
\[
\sum_{\substack{m+n=k\\
m,n:\mathrm{odd}
}
}\zeta{m,n \choose 1,-1}+\sum_{\substack{m+n=k\\
m,n:\mathrm{odd}\\
1\leq m\leq k-3
}
}\zeta{m,n \choose -1,1}-\zeta{k-1,1 \choose -1,-1}\in\mathbb{Q}\cdot(2\pi i)^{k}.
\]
\end{example}

\begin{example}
The first case where a period polynomial not in $\bar{C}_{w}^{\Gamma,\pm}$
is the case $k=8$. Then
\[
P=(X^{2}Y^{4}-2X^{4}Y^{2},-X^{4}Y^{2}+2X^{2}Y^{4},(Y^{2}-X^{2})^{3})
\]
is in
\[
\bar{W}_{w}^{\Gamma_{1}(N),+}.
\]
Then, for $(a,b)=(0,1)$, 
\begin{align*}
P(C_{a,-a+b})(X-Y,X) & =(X-Y)^{2}X^{4}-2(X-Y)^{4}X^{2}\\
 & =-X^{6}+6X^{5}Y-11X^{4}Y^{2}+8X^{3}Y^{3}-2X^{2}Y^{4}\\
 & =-720\frac{X^{6}}{6!0!}+720\frac{X^{5}Y}{5!1!}-528\frac{X^{4}Y^{2}}{4!2!}+288\frac{X^{3}Y^{3}}{3!3!}-96\frac{X^{2}Y^{4}}{2!4!},
\end{align*}
and thus
\[
(q_{0,1}^{7,1},\dots,q_{0,1}^{1,7})=(-720,720,-528,288,-96,0,0).
\]
For $(a,b)=(1,0)$,
\begin{align*}
P(C_{a,-a+b},X-Y,X) & =(2XY-Y^{2})^{3}\\
 & =8X^{3}Y^{3}-12X^{2}Y^{4}+6XY^{5}-Y^{6}\\
 & =288\frac{X^{3}Y^{3}}{3!3!}--576\frac{X^{2}Y^{4}}{2!4!}+720\frac{XY^{5}}{1!5!}-720\frac{Y^{6}}{0!6!}
\end{align*}
and thus
\[
(q_{1,0}^{7,1},\dots,q_{1,0}^{1,7})=(0,0,0,288,-576,720,-720).
\]
For, $(a,b)=(1,1)$,
\begin{align*}
P(C_{a,-a+b},X-Y,X) & =-(X-Y)^{4}X^{2}+2(X-Y)^{2}X^{4}\\
 & =X^{6}-4X^{4}Y^{2}+4X^{3}Y^{3}-X^{2}Y^{4}\\
 & =720\frac{X^{6}}{6!}-192\frac{X^{4}Y^{2}}{4!2!}+144\frac{X^{3}Y^{3}}{3!3!}-48\frac{X^{2}Y^{4}}{2!4!},
\end{align*}
and thus
\[
(q_{1,1}^{7,1},\dots,q_{1,1}^{1,7})=(720,0,-192,144,-48,0,0).
\]
Note that
\[
\sum_{r,s}q_{0,1}^{r,s}+\sum q_{1,0}^{r,s}=-624,\quad\sum_{r,s}q_{1,1}^{r,s}=624.
\]
Thus, Theorem \ref{thm:main_intro} and Corollary \ref{cor:intro_main}
say that
\begin{align*}
 & -720Z_{0,1}^{7,1}-528Z_{0,1}^{5,3}-96Z_{0,1}^{3,5}-576Z_{1,0}^{3,5}-720Z_{1,0}^{1,7}+720Z_{1,1}^{7,1}-192Z_{1,1}^{5,3}-48Z_{1,1}^{3,5}\\
 & =-\frac{1}{3}\left(720Z_{0,1}^{6,2}+288Z_{0,1}^{4,4}+288Z_{1,0}^{4,4}+720Z_{1,0}^{2,6}+144Z_{1,1}^{4,4}\right)-\frac{1}{3}\left(-624Z_{1}^{8}+624Z_{0}^{8}\right)
\end{align*}
and
\[
-720\zeta{7,1 \choose 0,1}-528\zeta{5,3 \choose 0,1}-96\zeta{3,5 \choose 0,1}-576\zeta{3,5 \choose 1,0}-720\zeta{1,7 \choose 1,0}+720\zeta{7,1 \choose 1,1}-192\zeta{5,3 \choose 1,1}-48\zeta{3,5 \choose 1,1}\in\mathbb{Q}\cdot(2\pi i)^{8}.
\]
\end{example}

\begin{example}
The converse of Corollary \ref{cor:intro_main} is not true in general.
For example,
\[
7\zeta{3,1 \choose 1,-1}+3\zeta{1,3 \choose -1,1}=\frac{11\pi^{4}}{720}\in\mathbb{Q}\cdot(2\pi i)^{4},
\]
but,
\[
7Z_{0,1}^{3,1}+3Z_{1,0}^{1,3}\notin\mathcal{P}_{4,2}^{{\rm ev}}.
\]
\end{example}

\subsection*{Acknowledgements}

The author thanks Katsumi Kina for helpful comments. This work was
supported by JSPS KAKENHI Grant Numbers JP18K13392 and JP22K03244.

\bibliographystyle{plain}
\bibliography{reference}

\end{document}